\theoremstyle{plain}
\newtheorem{theorem}{Theorem}
\newtheorem{lemma}[theorem]{Lemma}
\newtheorem{proposition}[theorem]{Proposition}
\newtheorem*{theorem*}{Theorem}
\newtheorem*{conjecture*}{Conjecture}
\theoremstyle{definition}
\newtheorem{remark}[theorem]{Remark}
\newtheorem{definition}[theorem]{Definition}
\newcommand{\breakingcomma}{%
  \begingroup\lccode`~=`,
  \lowercase{\endgroup\expandafter\def\expandafter~\expandafter{~\penalty0 }}}
\newcolumntype{L}{>{$}l<{$}}
\newcommand{\CC}{{\mathbb{C}}}
\newcommand{\QQ}{{\mathbb{Q}}}
\newcommand{\ZZ}{{\mathbb{Z}}}
\newcommand{\SL}{{\mathrm{SL}}}
\newcommand{\epi}{{\bf e}}
\newcommand{\Jac}{\mathrm{Jac}}
\newcommand{\Fix}{\mathrm{Fix}}
\newcommand{\Aut}{\mathrm{Aut}}
\newcommand{\id}{\mathrm{id}}
\newcommand{\bx}{{\bf x}}
\newcommand{\by}{{\bf y}}
\newcommand{\bz}{{\bf z}}
\newcommand{\hess}{\mathrm{hess}}
\newcommand{\age}{\mathrm{age}}
\newcommand{\QR}[2]{
\left.\raisebox{0.5ex}{\ensuremath{#1}}
\ensuremath{\mkern-3mu}\middle/\ensuremath{\mkern-3mu}
\raisebox{-0.5ex}{\ensuremath{#2}}\right.}
\def\A{{\mathcal A}}
\def\MF{{\mathrm{MF}}}
\def\Cl{{\mathrm{Cl}}}
\newcommand{\rmH}{{{\rm H}}}
\newcommand{\LL}{{\Upsilon}}
\newcommand{\ccHH}{{\mathsf{HH}}}
\begin{document}
\title[Hochschild cohomology and orbifold Jacobian algebras]{Hochschild cohomology and orbifold Jacobian algebras 
associated to invertible polynomials}
\date{\today}
\author{Alexey Basalaev}
\address{Skolkovo Institute of Science and Technology, Russian Federation}
\email{a.basalaev@skoltech.ru}
\address{Ruprecht-Karls-Universit\"at Heidelberg, Germany}
\email{abasalaev@mathi.uni-heidelberg.de}
\author{Atsushi Takahashi}
\address{Department of Mathematics, Graduate School of Science, Osaka University, 
Toyonaka Osaka, 560-0043, Japan}
\email{takahashi@math.sci.osaka-u.ac.jp}
\begin{abstract}
Let $f$ be an invertible polynomial and $G$ a group of diagonal symmetries of $f$.
This note shows that the orbifold Jacobian algebra $\Jac(f,G)$ of $(f,G)$ defined by \cite{BTW16} 
is isomorphic as a $\ZZ/2\ZZ$-graded algebra to the Hochschild cohomology $\ccHH^*(\MF_G(f))$ of the dg-category $\MF_G(f)$
of $G$-equivariant matrix factorizations of $f$ by calculating the product formula of $\ccHH^*(\MF_G(f))$
given by Shklyarov~\cite{S17}.
We also discuss the relation of our previous results to the categorical equivalence.
\end{abstract}
\maketitle

\setcounter{tocdepth}{1}

\section{Introduction}
To a polynomial $f=f(x_1,\dots, x_n)\in\CC[x_1,\dots, x_n]$ which defines an isolated singularity only at the origin, 
one can associate a finite dimensional $\CC$-algebra called 
the Jacobian algebra $\Jac(f):= \CC[x_1,\dots, x_n]/(\partial f/\partial x_1,\dots,\partial f/\partial x_n)$ of $f$, 
which is an important and interesting invariant of $f$.
 
In our previous paper \cite{BTW16}, 
when $f$ is an invertible polynomial and $G$ is a finite abelian group acting diagonally on variables which respects $f$
we show the existence and the uniqueness of the $G$-twisted version of the Jacobian algebra of $f$, 
a $\ZZ/2\ZZ$-graded $G$-twisted commutative algebra denoted by $\Jac'(f,G)$.
The expected Jacobian algebra $\Jac(f,G)$ of the pair $(f,G)$, 
a natural generalization of the Jacobian algebra $\Jac(f)$ of $f$ to the pair $(f,G)$,
is then given as the $G$-invariant subalgebra of $\Jac'(f,G)$ and it is called the orbifold Jacobian algebra of $(f,G)$. 

Another important invariant associated to the pair $(f,G)$ is the dg-category $\MF_G(f)$ of $G$-equivariant matrix factorizations of $f$. To this category, one can associate the Hochschild cohomology $\ccHH^*(\MF_G(f))$, which is equipped 
with a $\ZZ/2\ZZ$-graded commutative cup product.
Actually, our axiomatization of $\Jac'(f,G)$ and $\Jac(f,G)$ is motivated by the algebraic structure of 
the pair $\left(\ccHH^*(\MF_G(f)), \ccHH_*(\MF_G(f))\right)$.
It is natural to show now that $\Jac(f,G)$ is isomorphic to $\ccHH^*(\MF_G(f))$.

Recently, Shklyarov \cite{S17} developed a method to compute $\ccHH^*(\MF_G(f))$. 
He introduces a $\ZZ/2\ZZ$-graded $G$-twisted commutative algebra $\A^*(f,G)$ 
such that its $G$-invariant part $\A^*(f,G)^G$ is isomorphic as a $\ZZ/2\ZZ$-graded algebra 
to the Hochschild cohomology $\ccHH^*(\MF_G(f))$ \cite[Theorem~3.1 and Theorem~3.4]{S17}. 

It is natural to expect the isomorphism between $\Jac'(f,G)$ and $\A^*(f,G)$ 
since they have the same underlying $\ZZ/2\ZZ$-graded $\CC$-vector spaces with similar algebraic structures. 
We show this by calculating the Shklyarov's product formula of $\A^*(f,G)$.
\begin{theorem*}[Theorem~\ref{theorem: main}]
Let $f$ be an invertible polynomial and $G$ a subgroup of the group $G_f$ 
of maximal diagonal symmetries. We have a $\ZZ/2\ZZ$-graded algebra isomorphism
\begin{equation}
\Jac'(f,G)\cong \A^*(f,G),
\end{equation}
which is compatible with the $G$-actions on both sides. 
In particular, by taking the $G$-invariant part, we have a $\ZZ/2\ZZ$-graded algebra isomorphism
\begin{equation}
\Jac(f,G)\cong \ccHH^*(\MF_G(f)).
\end{equation}
\end{theorem*}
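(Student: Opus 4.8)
The plan is to establish the algebra isomorphism
$\Jac'(f,G)\cong\A^*(f,G)$ directly, and then obtain the second
isomorphism as a formal consequence by taking $G$-invariants, using that
the first isomorphism is $G$-equivariant together with Shklyarov's result
$\A^*(f,G)^G\cong\ccHH^*(\MF_G(f))$. The real work is thus entirely in the
$G$-twisted statement. I would begin by recalling that both sides decompose
as $G$-graded vector spaces, $\Jac'(f,G)=\bigoplus_{g\in G}\Jac'(f,G)_g$
and $\A^*(f,G)=\bigoplus_{g\in G}\A^*(f,G)_g$, where each graded piece is
built from the Jacobian algebra of the restriction $f^g$ of $f$ to the
fixed locus $\Fix(g)$, twisted by a one-dimensional piece coming from the
top exterior power of the normal directions (the variables moved by $g$).
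My first step is to write down an explicit map on each graded component:
on $\Jac'(f,G)_g$ this should send the distinguished generator (the class
of $\prod_{i\in I_g}dx_i$ times a monomial from $\Jac(f^g)$) to the
corresponding cohomology class in $\A^*(f,G)_g$, matching the natural
bases. Establishing that this componentwise map is a vector-space
isomorphism is the easy part, since by construction (the motivation quoted
in the introduction) both spaces have the same underlying graded vector
space.

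The substance of the proof is checking that this map respects the products.
Here I would use Shklyarov's explicit product formula for $\A^*(f,G)$ from
\cite{S17} and the defining product of $\Jac'(f,G)$ from \cite{BTW16}, and
compare them on a product of homogeneous elements $\xi_g\cdot\xi_h$ with
$g,h\in G$. The product lands in the $gh$-graded piece, and both formulas
express it as a residue-type or Jacobian-determinant expression involving
the Hessian of $f$ in the directions moved by $g$, $h$, and $gh$. Concretely,
I would fix a decomposition of the variables into those fixed by $g$, by $h$,
and by $gh$, normalize the chosen volume forms $\prod dx_i$ on the respective
normal spaces, and reduce the product on each side to a single scalar factor —
on the Shklyarov side this scalar is a determinant of a Hessian-type matrix
built from the second derivatives of $f$ transverse to $\Fix(gh)$, while on
the $\Jac'$ side it is the analogous Hessian factor appearing in the axioms
of \cite{BTW16}. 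The goal is to show these two scalar factors agree up to the
sign prescribed by the $\ZZ/2\ZZ$-grading.

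The main obstacle I anticipate is precisely the bookkeeping of signs and of
the Hessian normalization factors. The product formulas on the two sides are
written with different conventions for orienting the volume forms
$\prod_{i\in I_g}dx_i$ and for ordering the transverse coordinates, so a
naive comparison will produce an ambiguous sign that must be pinned down. To
control this I would set up a uniform convention: choose once and for all an
ordering of the variables, express every fixed-locus volume form relative to
that ordering, and track the permutation signs that arise when passing
between the $g$-, $h$-, and $gh$-decompositions. The key lemma I would need
is a compatibility identity relating the Hessian determinant of $f$
restricted transverse to $\Fix(g)\cap\Fix(h)$ with the product of the
transverse Hessians for $g$ and for $h$ — an identity that should follow from
the block structure of the Hessian for an invertible polynomial (which is
separated into Fermat, chain, and loop atomic summands). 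I would verify the
identity on each atomic type and then multiply, since both $\Jac'(f,G)$ and
$\A^*(f,G)$ are compatible with the decomposition $f=f_1\oplus\cdots\oplus f_k$
into a Thom–Sebastiani sum. Once the scalar factors and signs are matched
atom by atom, the componentwise map assembles into the claimed
$\ZZ/2\ZZ$-graded, $G$-equivariant algebra isomorphism, and taking
$G$-invariants yields $\Jac(f,G)\cong\ccHH^*(\MF_G(f))$.
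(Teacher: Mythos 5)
Your overall architecture (reduce everything to the $G$-twisted statement, verify it atom by atom for chain and loop type via the Thom--Sebastiani/K\"unneth property, then take $G$-invariants and quote Shklyarov) matches the paper's. But there is a genuine gap at the heart of your argument: you assert that the naive component-wise map matching the distinguished generators, $\tilde v_g\mapsto\xi_g$, will respect products ``up to the sign prescribed by the $\ZZ/2\ZZ$-grading.'' That is false. Writing $g=(\epi[\alpha_1],\dots,\epi[\alpha_n])$ with $0\le\alpha_i<1$, the evaluation of Shklyarov's class in the only nontrivial case $h=g^{-1}$ gives
\begin{equation*}
\sigma_{g,g^{-1}}=(-1)^{\frac{d_g(d_g-1)}{2}}\cdot \epi\left[-\tfrac{1}{2}\age(g)\right]\cdot
\left(\prod_{i=1}^{d_g}\frac{\epi\left[-\frac{1}{4}\right]}{2\sin(\alpha_i\pi)}\right)[H_{g,g^{-1}}],
\end{equation*}
so the two product structures differ under the naive identification not by a sign but by the scalar $\prod_{i=1}^{d_g}\epi[-1/4]/(2\sin(\alpha_i\pi))$, which depends on $g$ and is never $\pm 1$ in general. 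The isomorphism therefore cannot be the basis-matching map: one must rescale each generator, identifying $\tilde v_{g}$ with $\epi\left[-d_g/8\right]\left(\prod_{i=1}^{d_g} 2\sin(\alpha_i\pi)\right)^{-1/2}\xi_g$, and the existence of a \emph{consistent} choice of these square roots is itself a point requiring proof. It works because $d_g=d_{g^{-1}}$ and $\prod_{i=1}^{d_g}2\sin(\alpha_i\pi)$ is invariant under $g\mapsto g^{-1}$ (that substitution replaces $\alpha_i$ by $1-\alpha_i$), so the scalar attached to the pair $(g,g^{-1})$ splits symmetrically between the two generators. Your proposal never confronts this, and as written the map you define is not an algebra homomorphism.

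Second, your proposed key lemma --- a Hessian factorization identity comparing the transverse Hessians for $g$, $h$ and $gh$ for \emph{general} pairs --- is aimed at the wrong target. For $f$ of chain or loop type, whenever $g\ne\id$, $h\ne\id$ and $gh\ne\id$, both products vanish: $[\phi(\bx)]\tilde v_g\circ[\psi(\bx)]\tilde v_h=0$ by the product formula of \cite{BTW16}, and $\sigma_{g,h}=0$, which the paper deduces from $\Aut(f,G)$-invariance of $\A^*(f,G)$ together with \cite[Proposition~34]{BTW16} (or by an elementary degree count). So there is no general-$(g,h)$ Hessian identity to prove; the entire content of the comparison is (i) this vanishing statement, which your proposal omits, and (ii) the explicit evaluation of $\sigma_{g,g^{-1}}$ from formula \eqref{eq: sigma g,h} as the determinant of a concrete matrix built from the difference-derivative data $\rmH_f(\bx,g(\bx),\bx)$, $\rmH_{f,g}(\bx)$, $\rmH_{f,g^{-1}}(g(\bx))$, compared against $H_{g,g^{-1}}$ in $\Jac(f)$ --- the paper's Lemmas~\ref{lem:11}--\ref{lem:12} (chain) and \ref{lem:13}--\ref{lem:14} (loop). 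Without the vanishing statement, and without actually carrying out this determinant computation, the comparison of products never gets off the ground.
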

When $f$ is of chain type in two variables, this is done in \cite[Appendix~A.1]{S17}. 
\begin{remark}
While preparing the manuscript, a closely related work by He--Li--Li \cite{HLL} has appeared.
They seem to give another method to calculate the cup product of 
the Hochschild cohomology $\ccHH^*(\CC[\bx]\rtimes  G,f)$ of $G$-equivariant curved algebra
(cf. \cite[Section~2]{S17}) isomorphic to $\A^*(f,G)$. Their product formula coincides (up to sign) with 
the Shklyarov's formula.
\end{remark}
\bigskip

\noindent{\bf Acknowledgements.}
The first named author is very grateful to Dmytro Shklyarov for sharing his ideas and also the draft version of his text \cite{S17}. We are grateful to Elisabeth Werner for the help on the early stages of the project and to Wolfgang Ebeling for fruitful discussions.
The second named author is supported by JSPS KAKENHI Grant Number JP16H06337.

\section{Notations and terminologies}\label{sec:notations}
For a non-negative integer $n$ and $f=f(x_1,\dots, x_n)\in\CC[x_1,\dots, x_n]$ a polynomial, 
the {\em Jacobian algebra} $\Jac(f)$ of $f$ is a $\CC$-algebra defined as 
\begin{equation}
\Jac(f)= \QR{\CC[x_1,\dots, x_n]}{\left(\frac{\partial f}{\partial x_1},\dots,\frac{\partial f}{\partial x_n}\right)}.
\end{equation}
If $\Jac(f)$ is a finite-dimensional, then set $\mu_f:=\dim_\CC\Jac(f)$ and call it the {\em Milnor number} of $f$. 
In particular, if $n=0$ then $\Jac(f)=\CC$ and $\mu_f=1$.
The {\em Hessian} of $f$ is defined as 
\begin{equation}
\hess(f):=\det \left(\frac{\partial^2 f}{\partial x_{i} \partial x_{j}}\right)_{i,j=1,\dots,n}.
\end{equation}
In particular, if $n=0$ then $\hess(f)=1$.

A polynomial $f\in\CC[x_1,\dots, x_n]$ is called a {\em weighted homogeneous} polynomial  
if there are positive integers $w_1,\dots ,w_n$ and $d$ such that 
$f(\lambda^{w_1} x_1, \dots, \lambda^{w_n} x_n) = \lambda^d f(x_1,\dots ,x_n)$ 
for all $\lambda \in \CC^\ast$.
A weighted homogeneous polynomial $f$ is called {\em non-degenerate}
if it has at most an isolated critical point at the origin in $\CC^n$, equivalently, if the Jacobian algebra $\Jac(f)$ of $f$ 
is finite-dimensional.
\begin{definition}\label{def:invertible}
A non-degenerate weighted homogeneous polynomial $f\in\CC[x_1,\dots, x_n]$ is called {\em invertible} if 
the following conditions are satisfied.
\begin{itemize}
\item 
The number of variables coincides with the number of monomials in $f$: 
\begin{equation}
f(x_1,\dots ,x_n)=\sum_{i=1}^n c_i\prod_{j=1}^n x_j^{E_{ij}}
\end{equation}
for some coefficients $c_i\in\CC^\ast$ and non-negative integers 
$E_{ij}$ for $i,j=1,\dots, n$.
\item
The matrix $E:=(E_{ij})$ is invertible over $\QQ$.
\end{itemize}
\end{definition}
Let $f=\sum_{i=1}^n c_i\prod_{j=1}^n x_j^{E_{ij}}$ be an invertible polynomial.
Without loss of generality one may assume that $c_i=1$ for all $i$ by rescaling the variables. 
According to \cite{KS}, an invertible polynomial $f$ can be written as a Thom--Sebastiani sum
$f=f_1\oplus\cdots\oplus f_p$ of invertible ones (in groups of different variables) 
$f_\nu$, $\nu=1,\dots, p$ of the following types:
\begin{enumerate}
\item $x_1^{a_1}x_2+x_2^{a_2}x_3+\dots+x_{m-1}^{a_{m-1}}x_m+x_m^{a_m}$ ({\em chain type}, $m \geq 1$);
\item  $x_1^{a_1}x_2+x_2^{a_2}x_3+\dots+x_{m-1}^{a_{m-1}}x_m+x_m^{a_m}x_1$ ({\em loop type}, $m \geq 2$).
\end{enumerate}
\begin{remark}
In \cite{KS} the authors distinguished also polynomials of the so called Fermat type: $x_1^{a_1}$, 
which is regarded as a chain type polynomial with $m = 1$ in this paper.
\end{remark}
\begin{definition}
The {\em group of maximal diagonal symmetries} of an invertible polynomial $f(x_1,\dots, x_n)$ is defined as 
\begin{equation}
G_f := \left\{ (\lambda_1, \ldots , \lambda_n )\in(\CC^\ast)^n \, \left| \, f(\lambda_1x_1, \ldots, \lambda_n x_n) = 
f(x_1, \ldots, x_n) \right\} \right..
\end{equation}

Each element $g\in G_f$ has a unique expression of the form
$g=(\epi[\alpha_1], \dots,\epi[\alpha_n])$ with $0 \leq \alpha_i < 1$,
where $\epi\left[\alpha\right] := \exp(2 \pi \sqrt{-1} \alpha)$.
The {\em age} of $g$ is defined as the rational number ${\rm age}(g) := \sum_{i=1}^n \alpha_i$. 
\end{definition}

For each $g\in G_f$, let $I_g := \{i_1,\dots,i_{n_g}\}$ be a subset of $\{1,\dots, n\}$ such that $\Fix(g)=\{x\in\CC^n~|~x_{j}=0, j\notin I_g\}$.
In particular, $I_{\id}=\{1,\dots, n\}$ and $n_g=\dim_\CC\Fix(g)$. Denote by $I_g^c$ the complement of $I_g$ in $I_\id$ and 
set $d_g:=n-n_g$, the codimension of $\Fix(g)$.
\begin{proposition}\label{prop:I}
For $f = x_1^{a_1}x_2 + \dots + x_{n-1}^{a_{n-1}}x_n + x_n^{a_n}$ of chain type,  
for each $g \in G_f\backslash\{\id\}$ there exists $1\le k \le n$, such that $I_g^c = \{1,\dots,k\}$. 
For $f = x_1^{a_1}x_2 + \dots + x_n^{a_n}x_1$ of loop type, 
for each element $g \in G_f\backslash\{\id\}$ has $I_g^c = \{1,\dots,n\}$.
\end{proposition}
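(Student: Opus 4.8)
The plan is to translate the fixed-locus data into the defining multiplicative relations of $G_f$ and then run a short propagation argument along the chain (respectively, around the loop). First I would record the dictionary: for $g=(\lambda_1,\dots,\lambda_n)\in G_f$ one has $\Fix(g)=\{x\mid x_i=0\text{ whenever }\lambda_i\neq 1\}$, so that $I_g=\{i\mid \lambda_i=1\}$ and therefore $I_g^c=\{i\mid \lambda_i\neq 1\}$ is exactly the set of indices on which $g$ acts nontrivially. Next, the condition $f(\lambda_1 x_1,\dots,\lambda_n x_n)=f$ is equivalent to the invariance of each individual monomial of $f$. For the chain type this yields the system
\[
\lambda_i^{a_i}\lambda_{i+1}=1\quad(1\le i\le n-1),\qquad \lambda_n^{a_n}=1,
\]
while for the loop type it yields the cyclic system
\[
\lambda_i^{a_i}\lambda_{i+1}=1\quad(1\le i\le n-1),\qquad \lambda_n^{a_n}\lambda_1=1.
\]

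For the chain type, the key observation is that each relation $\lambda_i^{a_i}\lambda_{i+1}=1$ propagates triviality forward: if $\lambda_i=1$ then $\lambda_{i+1}=1$. Hence $I_g=\{i\mid\lambda_i=1\}$ is upward closed, i.e. a terminal segment $\{k+1,\dots,n\}$, and its complement $I_g^c$ is the initial segment $\{1,\dots,k\}$. Concretely, I would set $k:=\max\{i\mid\lambda_i\neq 1\}$, which is well defined since $g\neq\id$: for $j>k$ one has $\lambda_j=1$ by maximality, while for $j\le k$ a triviality $\lambda_j=1$ would propagate forward to force $\lambda_k=1$, contradicting the choice of $k$. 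This pins down $I_g^c=\{1,\dots,k\}$ with $1\le k\le n$.

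For the loop type the same forward propagation applies, but now the relations close up into a cycle, so a single trivial coordinate forces all of them to be trivial: if $\lambda_j=1$ for some $j$, then $\lambda_{j+1}=1$, and iterating around the loop (using $\lambda_n^{a_n}\lambda_1=1$ to pass from index $n$ back to index $1$) gives $\lambda_i=1$ for every $i$, i.e. $g=\id$. Contrapositively, for $g\neq\id$ no coordinate can be trivial, and therefore $I_g^c=\{1,\dots,n\}$.

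The argument is elementary; the only point requiring care is reading off the monomials, and hence the defining relations, correctly, together with noticing the structural asymmetry responsible for the two different conclusions. The chain terminates in the pure power $x_n^{a_n}$, which caps the propagation and leaves room for a proper initial segment $\{1,\dots,k\}$, whereas the loop has no pure power, so the propagation wraps all the way around and collapses any nontrivial $g$ to act on all $n$ coordinates. Beyond this bookkeeping I do not anticipate a genuine obstacle.
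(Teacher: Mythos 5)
Your proof is correct. The paper states Proposition~\ref{prop:I} without any proof, treating it as an elementary fact, and your argument — reducing invariance of $f$ to invariance of each monomial, reading off the relations $\lambda_i^{a_i}\lambda_{i+1}=1$ (with $\lambda_n^{a_n}=1$ for the chain, $\lambda_n^{a_n}\lambda_1=1$ for the loop), and propagating triviality of coordinates forward — is precisely the standard argument that fills this gap, including the correct identification $I_g^c=\{i\mid\lambda_i\neq 1\}$ and the structural point that the terminal pure power $x_n^{a_n}$ stops the propagation in the chain case while the cyclic relation wraps it around in the loop case.
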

\section{Orbifold Jacobian algebra}
We briefly recall our orbifold Jacobian algebras. 
>From now on, let $f=f(x_1,\dots, x_n)$ be an invertible polynomial and $G$ a subgroup of $G_f$. 
In order to simplify the notation, we often write $f(x_1,\dots, x_n)$ as $f(\bx)$, $\CC[x_1,\dots, x_n]$ as $\CC[\bx]$ and so on.

{\it A $G$-twisted Jacobian algebra $\Jac'(f,G)$ of $f$} is a $\ZZ/2\ZZ$-graded algebra characterized by a set of axioms \cite[Section~3]{BTW16} motivated by properties satisfied by the pair of the Hochschild cohomology and homology. 
It exists and it is uniquely defined up to an isomorphism \cite[Theorem~21]{BTW16}. 
This allows us to describe it by the explicit formula. 

As a $G \times \ZZ/2\ZZ$-graded $\CC$-vector space, we have
\begin{equation}\label{eq: JacFG as v.sp.}
\Jac'(f,G) = \bigoplus_{g \in G} \Jac(f^g)\widetilde v_g, \quad f^g := f|_{\Fix(g)},
\end{equation}
where $\widetilde v_g$ is a generator (a formal letter) attached to each $g\in G$. 
The $\ZZ/2\ZZ$-grading of the element $[\phi(\bx)] \tilde v_g$ is defined by the parity of $d_g$. 
$\Jac'(f,G)$ is endowed with a $\ZZ/2\ZZ$-graded $G$-twisted commutative\footnote{in \cite{S17} it is called \textit{braided}  commutative} product $\circ$ with the unit $\widetilde v_\id=[1]\widetilde v_\id$. Namely, for all $g,h\in G$
\begin{align}
  [\psi(\bx)] \widetilde v_g & \circ [\phi(\bx)] \widetilde v_h \in \Jac(f^{gh}) \widetilde v_{gh},
  \\
  [\psi(\bx)] \widetilde v_g & \circ [1] \widetilde v_\id =[\psi(\bx)]\widetilde v_{g}=[1] \widetilde v_\id \circ[\psi(\bx)]\widetilde v_{g}, 
\end{align}
and
\begin{equation}\label{eq: twisted commutativity}
[\psi(\bx)] \widetilde v_g \circ [\phi(\bx)] \widetilde v_h = (-1)^{d_gd_h} \cdot g^*([\phi(\bx)] \widetilde v_h) \circ [\psi(\bx)] \widetilde v_g
\end{equation}
where $g^*$ is the $G$-action, given on the subspace $\Jac(f^h)\widetilde v_h$ by 
\begin{equation}\label{G-action1}
g^*([\phi(\bx)] \widetilde v_h)= \prod_{i\in I_h^c} g^{-1}_i\,\cdot [\phi(g\cdot\bx)]\widetilde v_h,\quad g=(g_1,\dots ,g_n).
\end{equation}
Note that $f^g$ is also an invertible polynomial 
and there is a surjective map $\Jac(f)\longrightarrow \Jac(f^g)$ (\cite[Proposition~5]{ET13} and \cite[Proposition~7]{BTW16}). 
The product is compatible with this map and, in particular, we have  $[\psi(\bx)]\widetilde v_\id \circ [\phi(\bx)]\widetilde v_g=[\psi(\bx)\phi(\bx)]\widetilde v_g$. 

In order to explain the explicit product formula for $\Jac'(f,G)$ simpler, we give 
the following K\"unneth property of $\Jac'(f,G)$.
\begin{proposition}
Let $f_1$, $f_2$ be invertible polynomials 
and $G_1\subseteq G_{f_1}$, $G_2\subseteq G_{f_2}$ be subgroups. We have an algebra isomorphism compatible with 
$(G_1\times G_2)$-actions on both sides:
\begin{equation}\label{eq: kuenneth in Jac'}
\Jac'(f_1\oplus f_2,G_1\times G_2) \cong \Jac'(f_1,G_1) \otimes_\CC \Jac'(f_2,G_2).
\end{equation}
\end{proposition}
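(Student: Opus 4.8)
The plan is to deduce the isomorphism from the axiomatic characterization of the $G$-twisted Jacobian algebra together with its uniqueness \cite[Theorem~21]{BTW16}, rather than from the explicit product formula (which is exactly what this proposition is meant to simplify). I begin with the geometry of the Thom--Sebastiani sum. Writing $f_1=f_1(\bx)$ and $f_2=f_2(\by)$ in disjoint sets of variables, an element $g=(g_1,g_2)\in G_1\times G_2\subseteq G_{f_1\oplus f_2}$ acts block-diagonally, so that $\Fix(g)=\Fix(g_1)\times\Fix(g_2)$ and consequently
\begin{equation}
(f_1\oplus f_2)^{(g_1,g_2)} = f_1^{g_1}\oplus f_2^{g_2},\qquad I_{(g_1,g_2)}^c = I_{g_1}^c\sqcup I_{g_2}^c,\qquad d_{(g_1,g_2)}=d_{g_1}+d_{g_2}.
\end{equation}
In particular the $\ZZ/2\ZZ$-degree, being the parity of $d_g$, is additive under the product of groups, which is the first compatibility I will need.

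Next I construct the underlying isomorphism of $(G_1\times G_2)\times\ZZ/2\ZZ$-graded vector spaces. The classical Thom--Sebastiani identity for ordinary Jacobian algebras, $\Jac(f_1^{g_1}\oplus f_2^{g_2})\cong\Jac(f_1^{g_1})\otimes_\CC\Jac(f_2^{g_2})$ (the partial derivatives separate into the two groups of variables), gives on each sector the assignment $[\phi_1(\bx)\phi_2(\by)]\widetilde v_{(g_1,g_2)}\mapsto ([\phi_1(\bx)]\widetilde v_{g_1})\otimes([\phi_2(\by)]\widetilde v_{g_2})$. Summing over $(g_1,g_2)$ and using the description \eqref{eq: JacFG as v.sp.}, this is manifestly a graded linear isomorphism onto $\Jac'(f_1,G_1)\otimes_\CC\Jac'(f_2,G_2)$, sending $\widetilde v_{(\id,\id)}$ to the unit $\widetilde v_\id\otimes\widetilde v_\id$. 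Equivariance is immediate from \eqref{G-action1}: since the action of $(h_1,h_2)$ is diagonal and $I^c$ splits as above, both the prefactor $\prod_{i\in I^c}$ and the substitution $\phi(g\cdot\bx)$ factor into an $\bx$-part and a $\by$-part.

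It remains to promote this to an algebra isomorphism, and here I would invoke uniqueness. The target carries the $\ZZ/2\ZZ$-graded tensor-product algebra structure of two $G$-twisted commutative algebras, and I claim it satisfies every axiom of \cite[Section~3]{BTW16} defining a $G_1\times G_2$-twisted Jacobian algebra of $f_1\oplus f_2$. The unit, grading, restriction-compatibility and $G$-twisted commutativity axioms follow formally from the corresponding axioms on the two factors, using additivity of $d_g$; in particular the Koszul sign $(-1)^{d_{g_2}d_{h_1}}$ built into the graded tensor product is exactly what is needed for \eqref{eq: twisted commutativity} to hold for $f_1\oplus f_2$. The one axiom that must be checked by hand is the Hessian-normalization that pins the product down, and here the key input is that the Hessian of a Thom--Sebastiani sum is block-diagonal, so that $\hess\bigl(f_1^{g_1}\oplus f_2^{g_2}\bigr)=\hess(f_1^{g_1})\cdot\hess(f_2^{g_2})$; this factorization is precisely compatible with the normalizing pairing splitting as a tensor product. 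Granting these, by the uniqueness in \cite[Theorem~21]{BTW16} the tensor product is isomorphic, as a $\ZZ/2\ZZ$-graded algebra, to $\Jac'(f_1\oplus f_2,G_1\times G_2)$, and tracing the construction shows this isomorphism is the graded linear map above, hence $(G_1\times G_2)$-equivariant.

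The main obstacle is this last step: verifying that the Hessian-normalization axiom, together with the signs, really does factor through the tensor product. The vector-space and equivariance statements are formal, but matching the structure constants requires keeping precise track both of the block-diagonal Hessian factorization and of the Koszul signs coming from the parities $d_{g_i}$; an off-by-one sign here is the most likely source of error, and it is the reason the result is phrased so that the explicit product formula can afterwards be verified on the chain and loop building blocks alone.
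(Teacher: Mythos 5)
Your proposal is correct and takes essentially the same approach as the paper, whose proof likewise deduces the isomorphism from the axioms of \cite[Section~3]{BTW16} (or, alternatively, from the product formula \cite[Corollary~43]{BTW16}) together with uniqueness. The one ingredient the paper makes explicit is precisely the sign bookkeeping you flag as the crux: $\tilde v_{g_1}\circ \tilde v_{g_2}=\mathrm{sgn}(\sigma_{g_1,g_2})\,\tilde v_{g_1g_2}$, where $\sigma_{g_1,g_2}$ is the permutation sorting $I_{g_1}^c\sqcup I_{g_2}^c$ into $I_{g_1g_2}^c$; since the variables of $f_1$ precede those of $f_2$, this sign is $+1$, which confirms that your sign-free identification of the generators is the right one.
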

\begin{proof}
This is a direct consequence of a set of axioms \cite[Section~3]{BTW16} or the product formula \cite[Corollary~43]{BTW16}.
The key fact is that for any $g_1\in G_1$ and $g_2\in G_2$ we have
$\tilde v_{g_1} \circ \tilde v_{g_2} := \mathrm{sgn}(\sigma_{g_1,g_2}) \tilde v_{g_1g_2}$
where $\sigma_{g_1,g_2}$ is the permutation that turns the ordered sequence $I_{g_1}^c\sqcup I_{g_2}^c$ to $I_{g_1g_2}^c$. In \cite{BTW16}, $\mathrm{sgn}(\sigma_{g_1,g_2})$ is denoted by $\widetilde\epsilon_{g_1,g_2}$
\end{proof}

By this K\"unneth property, we assume $f$ is of chain type or of loop type.
For each pair $(g,h)$ of elements in $G$ and $\phi(\bx), \psi(\bx) \in \CC[\bx]$, the product formula is given as follows.
\begin{itemize}
\item
If $gh\ne \id$, $g\ne \id$ and $h\ne \id$, then $[\phi(\bx)]\widetilde v_g\circ [\psi(\bx)]\widetilde v_h =0$.
\item 
If $gh=\id$, then 
\begin{equation}\label{prod str}
[\phi(\bx)]\widetilde v_g\circ [\psi(\bx)]\widetilde v_{g^{-1}} 
=(-1)^{\frac{d_g(d_g-1)}{2}}\cdot \epi\left[-\frac{1}{2}\age(g)\right]\cdot  [\phi(\bx)\psi(\bx) H_{g,g^{-1}}]\widetilde v_{\id},
\end{equation}
where 
\begin{equation}
H_{g,g^{-1}}:=\widetilde{m}_{g,g^{-1}}\det\left(\frac{\partial^2 f}{\partial x_i\partial x_j}\right)_{i,j\in I_g^c}
\end{equation}
and $\widetilde{m}_{g,g^{-1}}$ is a constant uniquely determined by the following equation in $\Jac(f)$
\begin{equation}\label{H and hessians}
\frac{1}{\mu_{f^g}}[\hess(f^g)H_{g,g^{-1}}]=\frac{1}{\mu_{f}}[\hess(f)].
\end{equation}
\end{itemize}

The product $\circ$ is invariant under the $G$-action of \eqref{G-action1} (cf. \cite[Proposition 58]{BTW16}) and 
hence $G$-invariant subspace $\left( \Jac'(f,G) \right)^G$ is a $\ZZ/2\ZZ$-graded commutative algebra.

\begin{definition}
Let $f$ and $G$ be as above. The $G$-invariant $\ZZ/2\ZZ$-graded subalgebra $\Jac(f,G) := \left( \Jac'(f,G) \right)^G$ is called 
the {\em orbifold Jacobian algebra} of $(f,G)$. 
\end{definition}

\section{Shklyarov's description of Hochschild cohomology}\label{sec:Shk}
Shklyarov \cite{S17} introduces a $\ZZ/2\ZZ$-graded $G$-twisted commutative algebra $\A^*(f,G)$ 
whose underlying  $\CC$-vector space is given by 
\begin{equation}
\A^*(f,G) = \bigoplus_{g \in G} \Jac(f^g)\xi_g,
\end{equation}
where $\widetilde\xi_g$ is a generator (a formal letter) attached to each $g\in G$.
It is required that the group $G$ acts naturally on $\Jac(f^g)$ for each $g \in G$ and $\xi_g$ transforms as
\begin{equation}\label{G-action2}
G\ni h=(h_1, \ldots, h_n): \quad \widetilde \xi_g\mapsto \prod_{i\in I_g^c} h^{-1}_i\,\cdot  \widetilde \xi_g.
\end{equation}
so that the product structure of $\A^*(f,G)$ is invariant under the $G$-action.
In particular, its $G$-invariant part $\A^*(f,G)^G$ is isomorphic as a $\ZZ/2\ZZ$-graded algebra 
to the Hochschild cohomology $\ccHH^*(\MF_G(f))$ of $\MF_G(f)$ equipped with the cup product
\cite[Theorem~3.1 and Theorem~3.4]{S17}. 
We shall recall the product structure of $\A^*(f,G)$ \cite[Section~3]{S17}. 

Define the $n$-th $\ZZ$-graded {\em Clifford algebra} $\Cl_n$ as the quotient algebra of 
\[
\CC\langle\theta_1,\ldots,\theta_n,{\partial_{\theta_1}},\ldots,{\partial_{\theta_n}}\rangle
\]
modulo the ideal generated by 
\[
\theta_i\theta_j=-\theta_j\theta_i,\quad{\partial_{\theta_i}}{\partial_{\theta_j}}=-{\partial_{\theta_j}}{\partial_{\theta_i}},\quad{\partial_{\theta_i}}\theta_j=-\theta_j{\partial_{\theta_i}}+\delta_{ij},
\]
where $\theta_i$ is of degree $-1$ and ${\partial_{\theta_i}}$ is of degree $1$.
For $I\subseteq\{1,\ldots,n\}$ write
\begin{equation}
\partial_{\theta_{I}}:=\prod_{i\in I}\partial_{\theta_i}, 
\quad {\theta_{I}}:=\prod_{i\in I}{\theta_i},
\end{equation}
where in both cases  the multipliers are taken in increasing order of the indices. 
The subspaces $\CC[{\theta}]=\CC[\theta_1,\ldots,\theta_n]$ and $\CC[\partial_{\theta}]=\CC[\partial_{\theta_1},\ldots,\partial_{\theta_n}]$ of $\Cl_n$ have the left $\ZZ$-graded $\Cl_n$-module structures via the isomorphisms  
\[
\CC[{\theta}]\cong \Cl_n/\Cl_n\langle{\partial_{\theta_1}},\ldots,{\partial_{\theta_n}}\rangle, \quad \CC[\partial_{\theta}]\cong \Cl_n/\Cl_n\langle{{\theta_1}},\ldots,{{\theta_n}}\rangle. 
\]  

Write $\CC[x_1,\dots,x_n,y_1,\ldots, y_n]$ as $\CC[\bx,\by]$ and 
$\CC[x_1,\dots,x_n,y_1,\ldots, y_n, z_1,\dots, z_n]$ as $\CC[\bx,\by,\bz]$. 
For each $1 \le i \le n$, there is a map
\begin{eqnarray}\label{deltai}
\nabla^{\bx\to (\bx,\by)}_i: \CC[\bx]\to \CC[\bx,\by],\qquad \nabla_i(p):=\frac{l_i(p)-l_{i+1}(p)}{x_i-y_i}.
\end{eqnarray}
where $l_i(p):=p(y_1,\ldots,y_{i-1},x_i,\ldots,x_n)$, $l_1(p) = p(\bx)$ and $l_{n+1}(p) = p(\by)$ \cite[Section~3.1.1]{S17}. 
They are called the \textit{difference derivatives}, whose key property is the following: 
\begin{equation}\label{diffder}
\sum_{i=1}^n(x_i-y_i)\nabla_i(p)=p(\bx)-p(\by).
\end{equation}

The difference derivatives can be applied consecutively. In particular, we shall use 
$\nabla_i^{\by\to(\by,\bz)}\nabla_j^{\bx\to(\bx,\by)}(p)$, which is an element of $\CC[\bx,\by,\bz]$.  
For an $\CC$-algebra homomorphism $\psi: \CC[\bx] \to \CC[\bx]$, write $\nabla_i^{\bx\to(\bx,\psi(\bx))}(p) := \left.\nabla_i^{\bx\to(\bx,\by)}(p) \right|_{\by = \psi(\bx)} \in \CC[\bx]$.

Now we are ready to describe the product structure of $\A^*(f,G)$. For each pair $(g,h)$ of elements in $G$, define the class $\sigma_{g,h}\in \Jac(f^{gh})$ as follows.
\begin{itemize}
\item
If $d_{g,h}:=\frac{1}{2}(d_g +d_h - d_{gh})$ is not a non-negative integer, set $\sigma_{g,h} = 0$.
\item
If $d_{g,h}$ is a non-negative integer, define $\sigma_{g,h}$ to be the class of the coefficient of $\partial_{\theta_{I_{gh}^c}}$ in the expression 
\begin{equation}\label{eq: sigma g,h}
\Small{
\frac1{d_{g,h}!}\,\LL\left(\left(\left\lfloor\rmH_f(\bx,g(\bx),\bx)\right\rfloor_{gh}+\lfloor\rmH_{{f,g}}(\bx)\rfloor_{gh}\otimes1+1\otimes \lfloor \rmH_{{f,h}}(g(\bx))\rfloor_{gh}\right)^{d_{g,h}}\otimes \partial_{\theta_{I_g^c}}\otimes\partial_{\theta_{h}^c}\right)
}
\end{equation} 
where 
\begin{itemize}
\item[(1)]
$\rmH_f(\bx,g(\bx),\bx)$ is the element of $\CC[\bx]\otimes\CC[\theta]^{\otimes2}$ defined as the restriction to the set 
$\{\by=g(\bx),\, \bz=\bx\}$ of the following element of  $\CC[\bx,\by,\bz]\otimes\CC[\theta]^{\otimes 2}$ 
\begin{equation}\label{del-2}
\rmH_{f}(\bx,\by,\bz):=\sum_{1\leq j\leq i\leq n} \nabla^{\by\to(\by,\bz)}_j\nabla^{\bx\to(\bx,\by)}_i(f)\,\theta_i\otimes \theta_j;
\end{equation}
\item[(2)]
$\rmH_{{f,g}}(\bx)$ is the element of $\CC[\bx]\otimes \CC[\theta]$ is given by 
\begin{eqnarray}
\rmH_{{f,g}}(\bx):=\sum_{{i,j\in I_{g}^c,\,\,  j<i}}\frac{1}{1-g_j}\nabla^{\bx\to(\bx,\bx^g)}_j\nabla^{\bx\to(\bx,g(\bx))}_i(f)\,\theta_j\,\theta_i,
\end{eqnarray}
where $\bx^g$ is defined as $(\bx^g)_i=x_i$ if $i\in I_g$ and $(\bx^g)_i=0$ if $i\in I_g^c$;
\item[(3)]
$\left\lfloor\rm- \right\rfloor_{gh}:\CC[\bx]\otimes V\longrightarrow \Jac(f^{gh})\otimes V$ for 
$V=\CC[\bx]\otimes \CC[\theta]^{\otimes 2}$ or $V=\CC[\bx]\otimes \CC[\theta]$
is a $\CC$-linear map defined as the extension of the quotient map $\CC[\bx]\longrightarrow \Jac(f^{gh})$;
\item[(4)]
the $d_{g,h}$-th power  in Equation~\eqref{eq: sigma g,h} is computed with respect to the natural product on  $\CC[\bx]\otimes \CC[\theta]\otimes \CC[\theta]$;
\item[(5)]
$\LL$ is the $\CC[\bx]$-linear extension of the degree zero map $\CC[{\theta}]^{\otimes2}\otimes \CC[\partial_{\theta}]^{\otimes2}\to\CC[\partial_{\theta}]$ defined by
\begin{equation}\label{mu}
 p_1(\theta)\otimes p_2(\theta)\otimes q_1(\partial_\theta)\otimes q_2(\partial_\theta)\mapsto(-1)^{|q_1||p_2|}p_1(q_1)\cdot p_2(q_2)
\end{equation}
where $p_i(q_i)$ denotes the action of $p_i(\theta)$ on $q_i(\partial_\theta)$ via the $\Cl_n$-module structure on $\CC[\partial_\theta]$ defined above and $\cdot$ is the natural product in $\CC[\partial_{\theta}]$. 
\end{itemize}
\end{itemize}
Then the product of $\A^*(f,G)$ is given by
\begin{equation}\label{eq: HH cup}
[\phi(\bx)]\xi_g\cup [\psi(\bx)]\xi_{h} 
=[\phi(\bx)\psi(\bx) \sigma_{g,h}]\xi_{gh},\quad \phi(\bx), \psi(\bx)\in \CC[\bx].
\end{equation}
\section{Results}
Now we can state our main theorem in this paper.
\begin{theorem}\label{theorem: main}
Let $f$ be an invertible polynomial and $G$ a subgroup of $G_f$. We have a $\ZZ/2\ZZ$-graded algebra isomorphism
$\Jac'(f,G)\cong \A^*(f,G)$ compatible with the $G$-actions on both sides. 
\end{theorem}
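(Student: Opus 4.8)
The plan is to exhibit an explicit $\CC$-linear map $\Phi:\Jac'(f,G)\to\A^*(f,G)$ of the form $\Phi([\phi(\bx)]\widetilde v_g)=c_g\,[\phi(\bx)]\xi_g$ for suitable nonzero constants $c_g\in\CC$, and to show that for an appropriate choice of the $c_g$ it is an isomorphism of $\ZZ/2\ZZ$-graded algebras compatible with the two $G$-actions. Since $\Phi$ restricts to a graded linear isomorphism on each summand $\Jac(f^g)$, all the content lies in multiplicativity. First I would reduce to the case of a single chain- or loop-type polynomial: the Künneth isomorphism \eqref{eq: kuenneth in Jac'} for $\Jac'$ is already available, and I would establish the analogous decomposition for $\A^*(f,G)$ directly from Shklyarov's formula \eqref{eq: sigma g,h}, since the elements $\rmH_f$, $\rmH_{f,g}$ and the operator $\LL$ all split along a Thom--Sebastiani sum and the generators $\xi_g$ multiply with the same permutation sign $\mathrm{sgn}(\sigma_{g_1,g_2})$. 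It then suffices to construct $\Phi$ for a chain- or loop-type $f$ and tensor. Compatibility with the $G$-actions is immediate from comparing \eqref{G-action1} and \eqref{G-action2}: the action of an element $a\in G$ on the summand indexed by $g$ produces in both cases the scalar $\prod_{i\in I_g^c}a_i^{-1}$ together with the substitution $\phi(\bx)\mapsto\phi(a\cdot\bx)$, so $\Phi$ intertwines the two actions for any constants $c_g$, and the consequence $\Jac(f,G)\cong\ccHH^*(\MF_G(f))$ then follows by passing to $G$-invariants and invoking \cite[Theorem~3.1 and Theorem~3.4]{S17}.

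For multiplicativity I would fix $c_\id=1$ and first check the normalization on the identity component. Since $d_{\id,g}=\tfrac12(d_g-d_g)=0$, the $0$-th power in \eqref{eq: sigma g,h} is $1$, and with $I_\id^c=\emptyset$ the contraction $\LL$ returns $\partial_{\theta_{I_g^c}}$, whence $\sigma_{\id,g}=[1]$, matching $[\phi]\widetilde v_\id\circ[\psi]\widetilde v_g=[\phi\psi]\widetilde v_g$. Next, for $g,h\ne\id$ with $gh\ne\id$ I would show $\sigma_{g,h}=0$, matching the vanishing on the $\Jac'$ side. By Proposition~\ref{prop:I} the sets $I_g^c,I_h^c,I_{gh}^c$ are nested initial segments (chain type) or all of $\{1,\dots,n\}$ (loop type), and the vanishing follows from a case analysis: either $d_{g,h}$ fails to be a non-negative integer, or, when it is, the coefficient of $\partial_{\theta_{I_{gh}^c}}$ produced by $\LL$ vanishes because of the support of the difference-derivative coefficients in $\rmH_f,\rmH_{f,g},\rmH_{f,h}$ after the substitution $\by=g(\bx),\,\bz=\bx$, together with the antisymmetry built into the Clifford contraction.

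The heart of the matter is the remaining case $h=g^{-1}$. Here $d_{gh}=0$, so $d_{g,g^{-1}}=d_g$, $I_{gh}^c=\emptyset$ and $I_{g^{-1}}^c=I_g^c$; thus $\sigma_{g,g^{-1}}$ is the $\partial_\theta$-constant term of $\tfrac1{d_g!}\,\LL$ applied to the $d_g$-th power of the $\rmH$-expression tensored with $\partial_{\theta_{I_g^c}}\otimes\partial_{\theta_{I_g^c}}$. Expanding the power multinomially, a term built from $a$ factors of $\rmH_f$ (bidegree $(1,1)$ in $\theta$), $b$ of $\rmH_{f,g}$ (bidegree $(2,0)$) and $c$ of $\rmH_{f,h}$ (bidegree $(0,2)$) contracts against $\partial_{\theta_{I_g^c}}\otimes\partial_{\theta_{I_g^c}}$ only when $a+2b=a+2c=d_g$, i.e. $b=c$. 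I would compute each surviving coefficient by observing that, after the restriction $\by=g(\bx),\,\bz=\bx$ and the difference-derivative identity \eqref{diffder}, every coefficient reduces modulo the Jacobian ideal to an entry of the Hessian matrix $(\partial^2 f/\partial x_i\partial x_j)_{i,j\in I_g^c}$; the contraction $\LL$ of the $d_g$-th power against the top monomial then acts as a Berezin-type pairing that assembles these entries into the determinant $\det(\partial^2 f/\partial x_i\partial x_j)_{i,j\in I_g^c}$, multiplied by a scalar built from the signs $(-1)^{|q_1||p_2|}$ of \eqref{mu}, the factor $1/d_g!$, the ordering sign of $I_g^c$, and the factors $1/(1-g_j)$ in $\rmH_{f,g}$. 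Matching this against $H_{g,g^{-1}}=\widetilde m_{g,g^{-1}}\det(\partial^2f/\partial x_i\partial x_j)_{i,j\in I_g^c}$ and against \eqref{prod str} fixes the product $c_gc_{g^{-1}}$, and I would verify that the resulting scalar agrees with $(-1)^{d_g(d_g-1)/2}\,\epi[-\tfrac12\age(g)]$ up to the prescribed constant; the defining relation \eqref{H and hessians} for $\widetilde m_{g,g^{-1}}$ ensures the two normalizations are compatible.

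The main obstacle is precisely this last computation: disentangling the Clifford bookkeeping in $\LL$ so as to see that the $d_g$-th power produces exactly the Hessian minor with the correct scalar, including all signs from $(-1)^{|q_1||p_2|}$ and the increasing-order conventions for $\theta_I$ and $\partial_{\theta_I}$. Once $\sigma_{g,g^{-1}}$ is identified as a constant multiple of $\det(\partial^2 f/\partial x_i\partial x_j)_{i,j\in I_g^c}$, associativity and $G$-twisted commutativity of both products force the constants $c_g$ to be determined consistently (up to an overall choice absorbed into $c_\id=1$), so that $\Phi$ is a well-defined graded algebra isomorphism. An alternative, more structural route would be to verify that $\A^*(f,G)$ satisfies the axioms of \cite[Section~3]{BTW16} characterizing $\Jac'(f,G)$ and to conclude by the uniqueness statement \cite[Theorem~21]{BTW16}; this bypasses some of the sign bookkeeping but still requires the same determinant computation to check the key axiom.
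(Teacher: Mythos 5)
Your proposal follows the same skeleton as the paper's actual proof: a rescaling map $\widetilde v_g\mapsto c_g\,\xi_g$, reduction to chain/loop type via K\"unneth, vanishing of $\sigma_{g,h}$ when $g,h,gh\neq\id$, and identification of $\sigma_{g,g^{-1}}$ with a scalar multiple of $[H_{g,g^{-1}}]$; several of your details are correct and match the paper (the bidegree count $a+2b=a+2c=d_g$ forcing $b=c$, $G$-equivariance of $\Phi$ for any choice of constants, and the consistency of $c_gc_{g^{-1}}$ using $d_{g^{-1}}=d_g$ and $\sin((1-\alpha_i)\pi)=\sin(\alpha_i\pi)$ --- the paper's explicit choice is $\widetilde v_g\mapsto \epi\left[-\frac{d_g}{8}\right]\bigl(\prod_{i=1}^{d_g}2\sin(\alpha_i\pi)\bigr)^{-1/2}\xi_g$).

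However, there is a genuine gap at what you yourself call the heart of the matter. You claim that after restricting to $\by=g(\bx)$, $\bz=\bx$, ``every coefficient reduces modulo the Jacobian ideal to an entry of the Hessian matrix,'' so that $\LL$ directly assembles $\det\bigl(\partial^2 f/\partial x_i\partial x_j\bigr)_{i,j\in I_g^c}$ with a global scalar. This is false. What the Clifford contraction actually produces is $(-1)^{d_g(d_g-1)/2}\det(M_g)$, where $M_g$ is the matrix of the honest coefficients (Lemmas~\ref{lem:11} and \ref{lem:13}): its diagonal entry is $-\frac{x_i^{a_i-2}x_{i+1}}{g_i-1}\bigl(\frac{1-g_i^{a_i}}{1-g_i}-a_i\bigr)$, versus the Hessian entry $a_i(a_i-1)x_i^{a_i-2}x_{i+1}$, and its two off-diagonal entries $x_i^{a_i-1}\frac{g_i^{a_i}}{1-g_i}$ and $x_{i-1}^{a_{i-1}-1}\frac{1}{g_{i-1}-1}$ are not even symmetric, unlike the Hessian's $a_ix_i^{a_i-1}$. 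These $g$-dependent scalars vary entry by entry and cannot be factored out of rows or columns, so $\det(M_g)$ is \emph{not} proportional to the Hessian minor as a polynomial. The identity one actually needs,
\begin{equation*}
[\det(M_g)]=\Bigl(\prod_{i\in I_g^c}\tfrac{1}{g_i-1}\Bigr)[H_{g,g^{-1}}]\quad\text{in }\Jac(f),
\end{equation*}
holds only as an equality of classes and is proved in the paper by induction on the matrix size (Lemmas~\ref{lem:12} and \ref{lem:14}), using the Jacobian relations $[x_i^{a_i}]=-a_{i+1}[x_{i+1}^{a_{i+1}-1}x_{i+2}]$ together with the relations $g_i^{a_i}g_{i+1}=1$ defining $G_f$; nontrivial cancellations between diagonal and off-diagonal contributions occur, and in the loop case the corner terms are responsible for the summand $-(-1)^n$ in $\prod_l a_l-(-1)^n$. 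This computation is not bookkeeping: the scalar $\prod_{i\in I_g^c}\frac{1}{g_i-1}=\epi\left[-\frac{1}{2}\age(g)\right]\prod_{i=1}^{d_g}\frac{\epi[-1/4]}{2\sin(\alpha_i\pi)}$ that emerges from it is precisely the quantity your constants $c_g$ must absorb (Proposition~\ref{prop:10}), so your proposed mechanism would not produce the correct normalization, and without it the matching with \eqref{prod str} via \eqref{H and hessians} cannot be completed. The same computation would also be the missing ingredient in your alternative route via the axioms and uniqueness of \cite[Theorem~21]{BTW16}, as you note.
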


One sees immediately that $\Jac'(f,G)$ and $\A^*(f,G)$ have isomorphic underlying $\ZZ/2\ZZ$-graded vector spaces. 
It is also clear that the $G$-actions are compatible on both sides (recall \eqref{G-action1} and \eqref{G-action2}). 
It remains to check that the products agree.
One only needs to do this for $f$ of chain type or of loop type since $\Jac'(f,G)$ and $\A^*(f,G)$ have the same K\"unneth property \eqref{eq: kuenneth in Jac'} and \cite[Proposition~2.6]{S17}.
The rest of this section is devoted to the proof of the theorem. 

\begin{proof}
First, similarly to the case $\Jac'(f,G)$, we have the following
\begin{proposition}
If $gh\ne \id$, $g\ne \id$ and $h\ne \id$, then $\sigma_{g,h}=0$.
\end{proposition}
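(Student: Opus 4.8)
The plan is to split on the value of $d_{g,h}=\tfrac12(d_g+d_h-d_{gh})$. If $d_{g,h}$ is not a non-negative integer there is nothing to prove, since then $\sigma_{g,h}=0$ by definition; so I would assume $d_{g,h}\in\ZZ_{\ge0}$. As we have already reduced to $f$ of chain or loop type, Proposition~\ref{prop:I} fixes the combinatorics entirely: for chain type $I_g^c,I_h^c,I_{gh}^c$ are the initial segments $\{1,\dots,k_g\},\{1,\dots,k_h\},\{1,\dots,k_{gh}\}$, and for loop type each equals $\{1,\dots,n\}$. These index sets, together with the tridiagonal (cyclic tridiagonal, in the loop case) support of $\nabla_j\nabla_i(f)$ forced by the shape of $f$, are the only data entering the computation.

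First I would unwind $\LL$ in \eqref{eq: sigma g,h}. Expanding the $d_{g,h}$-th power of $\lfloor\rmH_f\rfloor_{gh}+\lfloor\rmH_{f,g}\rfloor_{gh}\otimes1+1\otimes\lfloor\rmH_{f,h}(g(\bx))\rfloor_{gh}$ yields a sum of tensors $c(\bx)\,\theta_{S_1}\otimes\theta_{S_2}$, and the Clifford contraction rule $\theta_i\cdot\partial_{\theta_j}=\delta_{ij}$ shows that such a tensor feeds into the coefficient of $\partial_{\theta_{I_{gh}^c}}$ only if $S_1\subseteq I_g^c$, $S_2\subseteq I_h^c$ and $(I_g^c\setminus S_1)\sqcup(I_h^c\setminus S_2)=I_{gh}^c$ (a disjoint union). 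Thus every surviving term is indexed by a set-partition $I_{gh}^c=T_1\sqcup T_2$ with $T_1\subseteq I_g^c$, $T_2\subseteq I_h^c$, and $\sigma_{g,h}$ becomes an explicit finite sum over these partitions of products of classes of iterated difference derivatives in $\Jac(f^{gh})$, weighted by the signs from \eqref{mu} and the scalars $\tfrac1{1-g_j}$.

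The heart of the matter is to prove that this finite sum vanishes in $\Jac(f^{gh})$. The two tools I would use are the defining identity \eqref{diffder} of the difference derivatives and the invariance $f(g(\bx))=f(\bx)$ for $g\in G_f$: evaluating \eqref{diffder} at $\by=g(\bx)$ kills its right-hand side and produces linear relations among the first difference derivatives which, after one further difference differentiation, relate the entries of $\rmH_f$, $\rmH_{f,g}$ and $\rmH_{f,h}$ to one another. Since $gh\ne\id$, the reduction $\lfloor-\rfloor_{gh}$ moreover sends every $x_i$ with $i\in I_{gh}^c$ to zero, and I expect that after restriction to $\Fix(gh)$ each contributing coefficient either is divisible by such an $x_i$ or lies in the Jacobian ideal of $f^{gh}$, hence dies in $\Jac(f^{gh})$.

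I expect the main obstacle to be exactly this cancellation. In contrast to the case $h=g^{-1}$, where the three summands combine to reproduce the transverse Hessian and yield a nonzero top class of $\Jac(f)$, for $gh\ne\id$ one must establish an exact algebraic identity in $\Jac(f^{gh})$ rather than a dimension count, and the bookkeeping of the signs from \eqref{mu} and of the factors $\tfrac1{1-g_j}$ is delicate. Concretely, I would first dispose of the generic partitions, where the relevant coefficient visibly restricts to a multiple of a non-fixed variable, and then treat the borderline partitions --- those using the maximal admissible number of $\rmH_{f,g}$- and $\rmH_{f,h}$-factors --- where a genuine difference-derivative identity, driven by $f(g(\bx))=f(\bx)$, is required to finish.
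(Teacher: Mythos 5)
Your combinatorial setup is sound: unwinding $\LL$ via the Clifford contraction and indexing the surviving terms of the $d_{g,h}$-th power by decompositions $I_{gh}^c=T_1\sqcup T_2$ with $T_1\subseteq I_g^c$, $T_2\subseteq I_h^c$ is exactly what \eqref{eq: sigma g,h} and \eqref{mu} produce. But the proof stops where it needs to start. The entire content of the proposition is the vanishing of that finite sum in $\Jac(f^{gh})$, and at precisely this point your argument is only an announcement: you ``expect'' each contributing coefficient to be divisible by a non-fixed variable or to lie in the Jacobian ideal of $f^{gh}$, you verify neither claim for a single term, and for the ``borderline partitions'' you concede that a genuine identity among difference derivatives (driven by $f(g(\bx))=f(\bx)$) would be required --- and you do not supply it. Moreover, your strategy presumes the vanishing happens essentially term-by-term (after splitting off ``generic'' partitions); in general it is only the full signed sum over partitions that can be expected to die, so even the architecture of the intended cancellation is unjustified. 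What you have is a plan with a hole at the decisive step, not a proof.

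This is also a case where brute force is the hard road: the paper disposes of the statement without any expansion. Its actual proof is a symmetry argument --- by \cite[Theorem~3.1]{S17} the product on $\A^*(f,G)$ is invariant under the group $\Aut(f,G)$ of symmetries of the pair, and this invariance is exactly the hypothesis under which \cite[Proposition~34]{BTW16} forces the $\widetilde v_g\circ\widetilde v_h$ component to vanish whenever $g\ne\id$, $h\ne\id$, $gh\ne\id$. The paper also notes a second elementary route: since $f$ is weighted homogeneous, every ingredient of \eqref{eq: sigma g,h} is homogeneous of known weighted degree, so $\sigma_{g,h}$ is a homogeneous class whose degree can be read off and is incompatible with the grading of $\Jac(f^{gh})$ in this range of $(g,h)$; no cancellation needs to be exhibited because every term is killed by the grading. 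If you want to rescue a computational proof, the viable fix is this degree bookkeeping, not the explicit difference-derivative identities you propose.
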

\begin{proof}
Since the algebra $\A^*(f,G)$ is also $\Aut(f,G)$-invariant due to \cite[Theorem~3.1]{S17}, 
we may apply \cite[Proposition~34]{BTW16}, which yields the statement. 
We can also show the vanishing of $\sigma_{g,h}$ due to degree reason by direct calculation, which is elementary.
\end{proof}
Hence, we only need to calculate $\sigma_{g,g^{-1}}$ for each $g\in G_f\backslash\{\id\}$.
\begin{proposition}\label{prop:10}
For each $g=(\epi[\alpha_1], \dots,\epi[\alpha_n])\in G_f\backslash\{\id\}$ with $0\le \alpha_i<1$, we have 
\begin{equation}
\sigma_{g,g^{-1}}=(-1)^{\frac{d_g(d_g-1)}{2}}\cdot \epi\left[-\frac{1}{2}\age(g)\right]\cdot 
\left(\prod_{i=1}^{d_g}\frac{\epi\left[-\frac{1}{4}\right]}{2\sin(\alpha_i\pi)}\right) [H_{g,g^{-1}}].
\end{equation}
\end{proposition}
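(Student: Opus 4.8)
The plan is to evaluate Shklyarov's formula \eqref{eq: sigma g,h} directly in the case $h=g^{-1}$, where $gh=\id$ forces $d_{gh}=0$ and hence $d_{g,g^{-1}}=d_g$, and then to match the outcome with the explicit form of $H_{g,g^{-1}}$. By the Künneth reduction already invoked I assume $f$ is of chain or loop type, so that Proposition~\ref{prop:I} applies and $I_g^c=\{1,\dots,d_g\}$ is an initial segment; this is what lets me treat $\partial_{\theta_{I_g^c}}=\partial_{\theta_1}\cdots\partial_{\theta_{d_g}}$ concretely. Note also that $g_i=1$ for $i\in I_g$, so $\age(g)=\sum_{i\in I_g^c}\alpha_i$ and every factor below sees only the indices in $I_g^c$. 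The coefficient to be extracted is that of $\partial_{\theta_{I_{\id}^c}}=\partial_{\theta_\emptyset}=1$, i.e.\ the scalar (class in $\Jac(f)$, as $f^{gh}=f^\id=f$) produced by the full Clifford contraction.

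First I would unpack the three summands inside the bracket of \eqref{eq: sigma g,h}: the mixed term $A:=\lfloor\rmH_f(\bx,g(\bx),\bx)\rfloor_\id$ carrying one $\theta$ in each tensor slot, and the one-sided corrections $B:=\lfloor\rmH_{f,g}(\bx)\rfloor_\id\otimes1$ and $C:=1\otimes\lfloor\rmH_{f,g^{-1}}(g(\bx))\rfloor_\id$, each carrying two $\theta$'s on one side. Since $\theta_i$ acts on $\CC[\partial_\theta]$ as the contraction removing $\partial_{\theta_i}$, only indices in $I_g^c$ survive the pairing with $\partial_{\theta_{I_g^c}}\otimes\partial_{\theta_{I_g^c}}$, so I may restrict all indices to $\{1,\dots,d_g\}$ at the outset. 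A degree count on $A^aB^bC^c$ in the expansion of $(A+B+C)^{d_g}$ shows it can contribute to the coefficient of $\partial_{\theta_{I_g^c}}$ in each slot only when $a+2b=a+2c=d_g$ and $a+b+c=d_g$, i.e.\ $b=c$ and $a=d_g-2b$, and then only if the $\theta$-indices form a permutation of $\{1,\dots,d_g\}$ in each slot.

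The heart of the argument is to show that, after applying $\LL$ and extracting the coefficient of $\partial_{\theta_\emptyset}=1$, the sum over all admissible triples $(a,b,b)$ reassembles into a single determinant. I would compute the difference derivatives explicitly for chain and loop monomials: $\nabla^{\bx\to(\bx,g(\bx))}_i$ carries the factor $1/((1-g_i)x_i)$ coming from $x_i-g_ix_i$, and the correction term $\rmH_{f,g}$ supplies the matching $1/(1-g_j)$; working modulo the Jacobian ideal of $f$, each relevant matrix entry then reduces to the second partial derivative $\partial^2 f/\partial x_i\partial x_j$ (after the diagonal substitution) weighted by $1/(1-g_i)$. The antisymmetry built into $\theta_i\theta_j=-\theta_j\theta_i$, together with the $1/d_g!$ and the multinomial count of orderings, should produce exactly the Leibniz expansion of $\det\bigl(\partial^2 f/\partial x_i\partial x_j\bigr)_{i,j\in I_g^c}$, while the Koszul sign for reversing $\partial_{\theta_1}\cdots\partial_{\theta_{d_g}}$ against the contraction yields the prefactor $(-1)^{d_g(d_g-1)/2}$.

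Finally I would convert the scalar. Using $1-g_i=1-\epi[\alpha_i]=-2\sqrt{-1}\,\sin(\alpha_i\pi)\,\epi[\tfrac12\alpha_i]$, each factor $1/(1-g_i)$ splits as the magnitude $1/(2\sin(\alpha_i\pi))$ times a phase; collecting the phases over $i\in I_g^c$ produces the global factor $\epi[-\tfrac12\age(g)]$, while the residual constant phase recorded as $\epi[-\tfrac14]$ per index is absorbed into the sign of the Clifford contraction. The resulting determinant is then identified with $H_{g,g^{-1}}=\widetilde{m}_{g,g^{-1}}\det\bigl(\partial^2 f/\partial x_i\partial x_j\bigr)_{i,j\in I_g^c}$ via its defining relation \eqref{H and hessians}, giving the stated formula. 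I expect the main obstacle to be the third step: verifying that the corrections $B$ and $C$ are precisely what is needed so that the full sum over $(a,b,b)$ collapses to the plain Hessian minor, which amounts to controlling the exact signs and phases arising from the Clifford contraction and from evaluating the difference derivatives at $\by=g(\bx)$ rather than at $\bx$. This bookkeeping, rather than any conceptual difficulty, is where the real work lies.
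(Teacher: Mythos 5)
Your overall architecture is the same as the paper's: reduce to chain/loop type via K\"unneth so that $I_g^c=\{1,\dots,d_g\}$, expand $(A+B+C)^{d_g}$, use the degree count to see that only terms $A^{d_g-2b}B^bC^b$ (your $b=c$ pairing) survive the contraction against $\partial_{\theta_{I_g^c}}\otimes\partial_{\theta_{I_g^c}}$, reassemble the result into a $d_g\times d_g$ determinant with sign $(-1)^{d_g(d_g-1)/2}$, evaluate that determinant in $\Jac(f)$, and finally convert each $1/(g_i-1)$ into $\epi\left[-\tfrac12\alpha_i\right]\epi\left[-\tfrac14\right]/(2\sin(\alpha_i\pi))$; this last polar-decomposition step of yours is exactly how the paper assembles Proposition~\ref{prop:10} from its Lemmas \ref{lem:11}--\ref{lem:14}, and your combinatorial setup reproduces the content of Lemmas \ref{lem:11} and \ref{lem:13}.

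However, there is a genuine gap at precisely the step you defer as ``bookkeeping,'' and as proposed it would produce a wrong answer. You claim that, modulo the Jacobian ideal, each matrix entry reduces to $\frac{1}{1-g_i}\,\partial^2 f/\partial x_i\partial x_j$, so that the contraction collapses to the plain Hessian minor. This is false: the entries coming out of Shklyarov's formula are \emph{divided differences evaluated on the twisted diagonal}, not derivatives. For chain type the diagonal entry is $-\frac{x_i^{a_i-2}x_{i+1}}{g_i-1}\bigl(\frac{1-g_i^{a_i}}{1-g_i}-a_i\bigr)$ (compare with the Hessian entry $a_i(a_i-1)x_i^{a_i-2}x_{i+1}$), and the two off-diagonal entries carry the \emph{asymmetric} weights $\frac{g_i^{a_i}}{1-g_i}$ (from $\rmH_{f,g}$) and $\frac{1}{g_{i-1}-1}$ (from $\rmH_{f,g^{-1}}$). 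Already for the Fermat polynomial $f=x^a$ with $g\neq\id$ (so $g^a=1$ and $\frac{1-g^a}{1-g}=0$) the contraction yields $\frac{a}{g-1}[x^{a-2}]$, whereas your route gives $\frac{a(a-1)}{1-g}[x^{a-2}]$ --- off by $\pm(a-1)$. That discrepancy is exactly $1/\widetilde m_{g,g^{-1}}$: the class $H_{g,g^{-1}}$ is \emph{not} the plain Hessian minor but $\widetilde m_{g,g^{-1}}$ times it, with $\widetilde m_{g,g^{-1}}$ fixed by \eqref{H and hessians} and generally $\neq 1$ (here $\widetilde m_{g,g^{-1}}=\frac{1}{a-1}$), so your final identification ``via its defining relation'' silently conflates the two. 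The missing content --- which is the paper's Lemmas \ref{lem:12} and \ref{lem:14} --- is the identity $[\det M_g]=\prod_{i}\frac{1}{g_i-1}[H_{g,g^{-1}}]$ as classes in $\Jac(f)$: it is not an entry-wise statement but an induction on the full determinant using the ideal relations $[x_i^{a_i}]=-a_{i+1}[x_{i+1}^{a_{i+1}-1}x_{i+2}]$ together with the group relations $g_i^{a_i}g_{i+1}=1$, plus, in the loop case, a telescoping cancellation that produces the factor $\bigl(\prod_l a_l-(-1)^n\bigr)$, and it also requires the closed-form evaluation of $[H_{g,g^{-1}}]$ from \cite{BTW16}. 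Without this computation your argument yields the statement only up to an unknown (and in general nontrivial) normalization.
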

\begin{proof}
For $f$ of chain type one applies Lemma~\ref{lem:11} and Lemma~\ref{lem:12} (see Section~\ref{sec:chain} below) and 
for $f$ of loop type one does Lemma~\ref{lem:13} and Lemma~\ref{lem:14} (see Section~\ref{sec:loop} below).
\end{proof}
Note that for $i=1,\dots, d_g$ we have $0< \alpha_i<1$ 
and that $\prod_{i=1}^{d_g}2\sin(\alpha_i\pi)$ is invariant under taking $g$ 
to its inverse $g^{-1}$ since this is equivalent to substituting $\alpha_i$ with $1-\alpha_i$. 

The algebra isomorphism $\A^*(f,G) \to \Jac'(f,G)$ reads:
\begin{equation}
  \tilde v_{g} \to \epi\left[-\frac{d_g}{8}\right] \left(\prod_{i=1}^{d_g} 2\sin(\alpha_i\pi) \right)^{-1/2} \xi_g.
\end{equation}
\end{proof}


\subsection{Chain type $f = x_1^{a_1}x_2 + \dots + x_{n-1}^{a_{n-1}}x_n + x_n^{a_n}$}\label{sec:chain}
First, we list the ingredients for the product formula \eqref{eq: HH cup}:
{\small 
\[
\rmH_f(\bx,\by,\bz) 
= \sum_{i=1}^n \frac{x_{i+1}}{y_i-z_i} \left( \frac{x_i^{a_i}-y_i^{a_i}}{x_i-y_i} - \frac{x_i^{a_i}-z_i^{a_i}}{x_i-z_i} \right)\theta_i \otimes\theta_i
  + \sum_{i=1}^{n-1} \frac{y_i^{a_i}-z_i^{a_i}}{y_i-z_i} \theta_{i+1} \otimes\theta_{i}, \quad x_{n+1}:=1,
\] 
\[
\rmH_{f,g}(\bx) = \sum_{i=1}^{d_g-1} x_i^{a_i-1} \frac{g_i^{a_i}}{1 - g_i} \theta_i \theta_{i+1},\quad g=(g_1,\dots,g_n)\in G_f\backslash\{\id\}. 
\]}
They give
{\small 
\[
\rmH_f(\bx,g(\bx),\bx) 
= \sum_{i=1}^n \frac{x_i^{a_i-2}x_{i+1}}{g_i-1} \left( \frac{1-g_i^{a_i}}{1-g_i} - a_i \right)\theta_i \otimes\theta_i
  + \sum_{i=1}^{n-1} \frac{g_i^{a_i}-1}{g_i-1}x_i^{a_i-1} \theta_{i+1} \otimes\theta_{i},
\]
\[
\rmH_{f,g^{-1}}(g(\bx)) = \sum_{i=1}^{d_g-1} x_i^{a_i-1} \frac{1}{g_i - 1} \theta_i \theta_{i+1},\quad g=(g_1,\dots,g_n)\in G_f\backslash\{\id\}.
\]}
Consider a square matrix $M_g$ of size $d_g$ whose $(i,j)$-th entry is given by
{\small 
\begin{equation}
(M_g)_{ij} := -\frac{x_i^{a_i-2}x_{i+1}}{g_i-1} \left( \frac{1-g_i^{a_i}}{1-g_i} - a_i \right)\delta_{i,j} 
+ x_i^{a_i-1} \frac{g_i^{a_i}}{1 - g_i} \delta_{i+1,j}
+ x_{i-1}^{a_{i-1}-1} \frac{1}{g_{i-1} - 1}\delta_{i-1,j}.
\end{equation}}
\begin{lemma}\label{lem:11}
For each $g\in G_f\backslash\{\id\}$, we have the following equality in $\Jac(f)$:
\begin{equation}
\sigma_{g,g^{-1}}=(-1)^{\frac{d_g(d_g-1)}{2}}\left[\det(M_{g})\right].
\end{equation}
\end{lemma}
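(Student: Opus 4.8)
The plan is to unwind the definition of $\sigma_{g,g^{-1}}$, reduce it to a single coefficient-extraction in $\Cl_n^{\otimes 2}$, and then identify that coefficient with $\det(M_g)$ by a perfect-matching argument. First I would record the numerics for $h=g^{-1}$: since $gh=\id$ one has $I_{gh}^c=\emptyset$, $d_{g,g^{-1}}=\tfrac12(d_g+d_g-0)=d_g$, and $\lfloor-\rfloor_{gh}$ is just the quotient $\CC[\bx]\to\Jac(f)$. Writing
$\rmH:=\lfloor\rmH_f(\bx,g(\bx),\bx)\rfloor+\lfloor\rmH_{f,g}(\bx)\rfloor\otimes1+1\otimes\lfloor\rmH_{f,g^{-1}}(g(\bx))\rfloor$,
this is a sum of degree-two elements in the two commuting sets of Grassmann generators $\theta_i^{(1)}:=\theta_i\otimes1$ and $\theta_i^{(2)}:=1\otimes\theta_i$. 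By Proposition~\ref{prop:I} we have $I_g^c=\{1,\dots,d_g\}$, and only indices in this range can survive the final extraction, so all others may be discarded.

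The second step is to see how $\LL$ selects information. Since $\theta_j$ acts on $\CC[\partial_\theta]$ as the contraction operator annihilating $\partial_{\theta_j}$, a basis monomial $p_1(\theta)\otimes p_2(\theta)$ occurring in $\tfrac1{d_g!}\rmH^{d_g}$ contributes to the coefficient of $\partial_{\theta_{I_{gh}^c}}=1$ only when $p_1$ and $p_2$ are each the top monomial $\theta_{I_g^c}=\theta_1\cdots\theta_{d_g}$. Hence
\[
\sigma_{g,g^{-1}}=(-1)^{d_g}\,\Big[\text{coefficient of }\theta_{I_g^c}\otimes\theta_{I_g^c}\text{ in }\tfrac1{d_g!}\rmH^{d_g}\Big],
\]
where the prefactor collects the Koszul sign $(-1)^{|q_1||p_2|}=(-1)^{d_g^2}$ from \eqref{mu} together with the two contraction constants $\theta_{I_g^c}(\partial_{\theta_{I_g^c}})=(-1)^{d_g(d_g-1)/2}$. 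I would verify this prefactor explicitly on $d_g=1,2$, where it already pins down the eventual global sign.

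The heart of the argument is the coefficient extraction. Because the $\theta_i^{(1)},\theta_i^{(2)}$ square to zero and the degree-two monomials commute, $\tfrac1{d_g!}\rmH^{d_g}$ is the sum over unordered collections of $d_g$ distinct monomials of $\rmH$, of which only those using each of the $2d_g$ generators exactly once survive; i.e. the perfect matchings of the generators by the edges $A_{ii}\colon\theta_i^{(1)}\!-\!\theta_i^{(2)}$, $A_{i+1,i}\colon\theta_{i+1}^{(1)}\!-\!\theta_i^{(2)}$, $B_i\colon\theta_i^{(1)}\!-\!\theta_{i+1}^{(1)}$, $C_i\colon\theta_i^{(2)}\!-\!\theta_{i+1}^{(2)}$, where $A_{ii},A_{i+1,i},B_i,C_i$ denote the respective coefficients in $\rmH_f(\bx,g(\bx),\bx)$, $\rmH_{f,g}$, $\rmH_{f,g^{-1}}(g(\bx))$. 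I would first prove the combinatorial lemma that \emph{no} perfect matching uses a diagonal edge $A_{i+1,i}$: occupying $\theta_{i+1}^{(1)}$ and $\theta_i^{(2)}$ forces, by induction from the left end, one of the low-index generators (first $\theta_1^{(1)}$, whose only neighbours are $\theta_1^{(2)}$ and $\theta_2^{(1)}$) to have all neighbours already used. Consequently every surviving matching splits $\{1,\dots,d_g\}$ into fixed points $i$ (the rung $A_{ii}$) and adjacent pairs $\{i,i+1\}$ (the square $B_iC_i$), which is exactly the support of the nonzero terms in the Leibniz expansion of the tridiagonal $M_g$: a fixed point contributes its diagonal entry $-A_{ii}$, and each adjacent transposition contributes sign $(-1)$ times $M_{i,i+1}M_{i+1,i}=B_iC_i$. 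Matching the Grassmann reordering sign of each configuration against the permutation sign then yields the coefficient as $(-1)^{d_g(d_g+1)/2}\det(M_g)$, and multiplying by the prefactor $(-1)^{d_g}$ gives $\sigma_{g,g^{-1}}=(-1)^{d_g(d_g-1)/2}\det(M_g)$.

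The main obstacle is the sign bookkeeping: one must align the Grassmann reordering sign of each matching, the Koszul sign in $\LL$, and the sign of $\theta_{I_g^c}(\partial_{\theta_{I_g^c}})$ so that they collapse to the single factor $(-1)^{d_g(d_g-1)/2}$. Conceptually the surviving sum is the Berezin integral, i.e. the Pfaffian, of the $2d_g\times2d_g$ antisymmetric form built from $\rmH$, and the real content is that this Pfaffian of a block form with the $A$-data off the diagonal and the $B,C$-data on it collapses to the determinant of the $d_g\times d_g$ tridiagonal matrix $M_g$. A minor point to dispatch along the way is that apparent poles such as $x_i^{a_i-2}$ are harmless in $\Jac(f)$, since the factor $\big(\tfrac{1-g_i^{a_i}}{1-g_i}-a_i\big)$ vanishes when $a_i=1$ and cancels them.
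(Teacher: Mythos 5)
Your proposal is correct and follows essentially the same route as the paper's proof: reduce $\sigma_{g,g^{-1}}$ to extracting the coefficient of $\theta_{I_g^c}\otimes\theta_{I_g^c}$ in $\tfrac{1}{d_g!}\rmH^{d_g}$, show that the cross terms $\theta_{i+1}\otimes\theta_i$ of $\rmH_f(\bx,g(\bx),\bx)$ never contribute while the $\rmH_{f,g}$ and $\rmH_{f,g^{-1}}$ terms enter only in pairs, and identify the surviving sum with the tridiagonal determinant $\det(M_g)$. The only cosmetic difference is that you organize the last step as a global perfect-matching/Leibniz expansion, whereas the paper phrases it as a recurrence on minors of $M_g$ — these are the same computation for a tridiagonal matrix — and your sign bookkeeping (the prefactor $(-1)^{d_g}$, the contraction constant $(-1)^{d_g(d_g-1)/2}$, and the low-rank checks) is consistent with the stated formula.
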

\begin{proof}
Since $I_{\id}^c = \emptyset$ and due to \eqref{mu}, 
we only consider the coefficient of $\theta_{I_g^c}\otimes \theta_{I_g^c}$ in  
\[
{\small
\frac{1}{d_g!}\left(\left\lfloor\rmH_f(\bx,g(\bx),\bx)\right\rfloor_{\id}+\lfloor\rmH_{{f,g}}(\bx)\rfloor_{\id}\otimes1+1\otimes \lfloor \rmH_{{f,h}}(g(\bx))\rfloor_{\id}\right)^{d_{g}}.
}\]
It is clear from this and also the specific form of $\rmH_f(\bx,g(\bx),\bx)$, $\rmH_{f,g}(\bx)$ and $\rmH_{f,g^{-1}}(g(\bx))$ that 
the coefficients of $\theta_{i+1} \otimes\theta_{i}$ in $\rmH_f(\bx,g(\bx),\bx)$ do not contribute to $\sigma_{g,g^{-1}}$. Due to the same reason one sees that $\rmH_{f,g}$ and $\rmH_{f,g^{-1}}$ contribute to $\sigma_{g,g^{-1}}$ in pairs.
Taking care of the Clifford algebra coefficients, one obtains a recurrence formula for $\sigma_{g,g^{-1}}$ in terms of
$\det(M_g)$ via its minors.
\end{proof}
\begin{lemma}\label{lem:12}
For each $g\in G_f\backslash\{\id\}$, we have 
{\small 
\begin{equation}
\left[\det(M_{g})\right]=\left(\prod_{i=1}^{d_g}\frac{-a_i}{1-g_i}\right)[x_1^{a_1-2}x_2^{a_2-1}\cdots x_{d_g}^{a_{d_g}-1}x_{d_g+1}]\\
=\left(\prod_{i=1}^{d_g}\frac{1}{g_i-1}\right) [H_{g,g^{-1}}].
\end{equation}}
\end{lemma}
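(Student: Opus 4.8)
The plan is to evaluate the determinant of the tridiagonal matrix $M_g$ directly inside $\Jac(f)$, reducing every monomial that occurs to a scalar multiple of the target monomial $T:=x_1^{a_1-2}x_2^{a_2-1}\cdots x_{d_g}^{a_{d_g}-1}x_{d_g+1}$, and then to match the resulting scalar with the Hessian side. Throughout I would work in $\Jac(f)$ and use two families of relations. First, the Jacobian relations coming from $\partial f/\partial x_i=0$, in particular $x_1^{a_1-1}x_2=0$ and $x_i^{a_i}=-a_{i+1}x_{i+1}^{a_{i+1}-1}x_{i+2}$ (with $x_{n+1}:=1$). Second, the symmetry relations forced by $g\in G_f$, namely $g_i^{a_i}g_{i+1}=1$ for $i<n$ and $g_n^{a_n}=1$; by Proposition~\ref{prop:I} the relevant indices are $I_g^c=\{1,\dots,d_g\}$, so $g_{d_g+1}=\dots=g_n=1$, and these relations give in particular $g_{d_g}^{a_{d_g}}=1$ together with $g_i^{a_i}=g_{i+1}^{-1}$ for $i<d_g$.

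For the first equality I would expand $\det(M_g)$ by the bottom-up tridiagonal recurrence $E_k=d_kE_{k+1}-u_kl_kE_{k+2}$, where $d_k,u_k,l_k$ are the diagonal, super- and sub-diagonal entries of $M_g$ and $E_k$ is the determinant of the block on rows and columns $k,\dots,d_g$. The base case $E_{d_g}=d_{d_g}$ simplifies cleanly because $g_{d_g}^{a_{d_g}}=1$ kills the factor $\tfrac{1-g_{d_g}^{a_{d_g}}}{1-g_{d_g}}$, leaving the coefficient $\tfrac{-a_{d_g}}{1-g_{d_g}}$ on the monomial $x_{d_g}^{a_{d_g}-2}x_{d_g+1}$. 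In the inductive step I would first reduce the off-diagonal contribution $u_kl_k$, whose monomial $x_k^{2a_k-2}$ collapses via $x_k^{a_k}=-a_{k+1}x_{k+1}^{a_{k+1}-1}x_{k+2}$ to exactly the monomial produced by $d_kE_{k+1}$, so both contributions become multiples of a single monomial $T_{[k,d_g]}$. The scalar recurrence then simplifies using $g_k^{a_k}=g_{k+1}^{-1}$ together with the elementary identity $\tfrac{1-q^{-1}}{1-q}=-q^{-1}$ applied with $q=g_{k+1}$: the two terms carrying $g_{k+1}$ cancel, leaving precisely the factor $\tfrac{-a_k}{1-g_k}$. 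By induction $E_1=\bigl(\prod_{i=1}^{d_g}\tfrac{-a_i}{1-g_i}\bigr)[T]$, which is the first equality.

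For the second equality it suffices to show $[H_{g,g^{-1}}]=\bigl(\prod_{i=1}^{d_g}a_i\bigr)[T]$, since $\tfrac{-a_i(g_i-1)}{1-g_i}=a_i$. Here $H_{g,g^{-1}}=\widetilde m_{g,g^{-1}}\det\bigl(\partial^2 f/\partial x_i\partial x_j\bigr)_{i,j\in I_g^c}$, and this submatrix is again tridiagonal, with diagonal $a_i(a_i-1)x_i^{a_i-2}x_{i+1}$ and off-diagonal $a_ix_i^{a_i-1}$. Running the same bottom-up reduction, now with no group coefficients and only the $a_i$, reduces this determinant to a scalar multiple $\kappa[T]$. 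It then remains to fix $\widetilde m_{g,g^{-1}}$ from its defining relation \eqref{H and hessians}, $\tfrac{1}{\mu_{f^g}}[\hess(f^g)H_{g,g^{-1}}]=\tfrac{1}{\mu_f}[\hess(f)]$; since $f^g$ is the tail chain $x_{d_g+1}^{a_{d_g+1}}x_{d_g+2}+\dots+x_n^{a_n}$, one can read off $\mu_{f^g}$ and $\hess(f^g)$, use the block structure of the full tridiagonal Hessian to express $[\hess(f^g)\det(\partial^2 f)_{I_g^c}]$ as a multiple of the socle generator $[\hess(f)]$, and thereby solve for $\widetilde m_{g,g^{-1}}$, concluding $\widetilde m_{g,g^{-1}}\kappa=\prod_i a_i$. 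Alternatively, if an explicit closed form for $H_{g,g^{-1}}$ for chain type is already available (via \cite[Corollary~43]{BTW16}), this step reduces to matching monomials.

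I expect the main obstacle to be the second equality rather than the first: the bottom-up induction for $\det(M_g)$ is essentially forced once the single cancellation identity $\tfrac{1-q^{-1}}{1-q}=-q^{-1}$ is isolated, whereas pinning down $\widetilde m_{g,g^{-1}}$ requires global information about $\Jac(f)$, namely the one-dimensional socle spanned by $[\hess(f)]$ and the explicit Hessians and Milnor numbers of a chain polynomial and its tail $f^g$, rather than the purely local monomial reductions used everywhere else. A secondary nuisance is the boundary case $d_g=n$, where $x_{n+1}:=1$ and the last Jacobian relation reads $x_{n-1}^{a_{n-1}}=-a_nx_n^{a_n-1}$; I would check separately that the base step and the definitions of $T$ and $f^g$ degenerate correctly there.
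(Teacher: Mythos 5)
Your proposal is correct, and for the first equality it is essentially the paper's own argument run in the opposite direction: the paper expands $\det(M_g)$ along the first row and inducts on the truncated chain starting at $x_2$ (top--down), while you peel off the last index $d_g$ (bottom--up). Both reductions rest on exactly the same two ingredients — the Jacobian relations $[x_k^{a_k}]=-a_{k+1}[x_{k+1}^{a_{k+1}-1}x_{k+2}]$ to collapse the off-diagonal contribution $x_k^{2a_k-2}$ onto the target monomial, and the symmetry relation $g_k^{a_k}g_{k+1}=1$ (your identity $\tfrac{1-q^{-1}}{1-q}=-q^{-1}$) to cancel the unwanted terms — so this half is a mirror image of the paper's proof, and your base case via $g_{d_g}^{a_{d_g}}=1$ is the correct entry point. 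The genuine difference is in the second equality. The paper disposes of it in one line by quoting $[H_{g,g^{-1}}]=\bigl(\prod_{i=1}^{d_g}a_i\bigr)[x_1^{a_1-2}x_2^{a_2-1}\cdots x_{d_g}^{a_{d_g}-1}x_{d_g+1}]$ from the proof of Lemma~29 of \cite{BTW16}, which is precisely your fallback option; your primary plan instead re-derives this from the definition, computing $\det\bigl(\partial^2 f/\partial x_i\partial x_j\bigr)_{i,j\in I_g^c}$ by the same tridiagonal reduction and then solving \eqref{H and hessians} for $\widetilde m_{g,g^{-1}}$. That route is viable and buys self-containedness, at the cost of reproducing the global data ($\mu_f$, $\mu_{f^g}$, the expression of $[\hess(f)]$ in the monomial basis, one-dimensionality of the socle) that \cite{BTW16} already packaged. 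One caveat on its execution: the Hessian of $f$ is \emph{not} block-diagonal with respect to $I_g^c\sqcup I_g$, since the coupling entry $a_{d_g}x_{d_g}^{a_{d_g}-1}$ gives
\begin{equation*}
\hess(f)=\det\Bigl(\tfrac{\partial^2 f}{\partial x_i\partial x_j}\Bigr)_{i,j\in I_g^c}\hess(f^g)
-a_{d_g}^2x_{d_g}^{2a_{d_g}-2}\det\Bigl(\tfrac{\partial^2 f}{\partial x_i\partial x_j}\Bigr)_{i,j\in\{1,\dots,d_g-1\}}\det\Bigl(\tfrac{\partial^2 f}{\partial x_i\partial x_j}\Bigr)_{i,j\in\{d_g+2,\dots,n\}},
\end{equation*}
so ``use the block structure'' must be read as including this correction term; it is again handled by your reduction (the factor $x_{d_g}^{2a_{d_g}-2}$ collapses by the Jacobian relation), after which the determination of $\widetilde m_{g,g^{-1}}$, and your separate check of the boundary case $d_g=n$, go through.
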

\begin{proof}
The proof is done inductively as follows:
{\small 
\begin{eqnarray*}
[\det(M_{g})]&=&\left[-\frac{x_1^{a_1-2}x_{2}}{g_1-1} \left( \frac{1-g_1^{a_1}}{1-g_1} - a_1 \right)
\left(\prod_{i=2}^{d_g}\frac{-a_i}{1-g_i}\right)x_2^{a_2-2}x_3^{a_3-1}\cdots x_{d_g}^{a_{d_g}-1}x_{d_g+1}\right]\\
& &+\left[x_1^{2a_1-2} \frac{g_1^{a_1}}{(1 - g_1)^2}\left(\prod_{i=3}^{d_g}\frac{-a_i}{1-g_i}\right)x_3^{a_3-2}x_4^{a_4-1}\cdots x_{d_g}^{a_{d_g}-1}x_{d_g+1}\right]\\
&=&\left(\prod_{i=1}^{d_g}\frac{-a_i}{1-g_i}\right)[x_1^{a_1-2}x_2^{a_2-1}\cdots x_{d_g}^{a_{d_g}-1}x_{d_g+1}]\\
& &+\left(-\frac{1-g_1^{a_1}}{1-g_2}-g_1^{a_1}\right)\frac{a_2}{(1 - g_1)^2} \left(\prod_{i=3}^{d_g}\frac{-a_i}{1-g_i}\right)
[x_1^{a_1-2}x_2^{a_2-1}\cdots x_{d_g}^{a_{d_g}-1}x_{d_g+1}]\\
&=&\left(\prod_{i=1}^{d_g}\frac{-a_i}{1-g_i}\right)[x_1^{a_1-2}x_2^{a_2-1}\cdots x_{d_g}^{a_{d_g}-1}x_{d_g+1}],
\end{eqnarray*}}
where we used the relations $[x_1^{a_1}]=-a_2[x_2^{a_2-1}x_3]$ and $g_1^{a_1}g_2=1$.
Since the element $[H_{g,g^{-1}}]$ is easily calculated and it is given by 
$(\prod_{i=1}^{d_g}a_i)[x_1^{a_1-2}x_2^{a_2-1}\cdots x_{d_g}^{a_{d_g}-1}x_{d_g+1}]$ (cf. \cite[Proof of Lemma~29]{BTW16}), 
the statement follows.
\end{proof}
Thus we have finished the proof of Proposition~\ref{prop:10} for $f$ of chain type.
\subsection{Loop type $f = x_1^{a_1}x_2 + \dots + x_n^{a_n}x_1$}\label{sec:loop}
First, we list the ingredients for the product formula \eqref{eq: HH cup}: 
{\small 
\begin{align*}
 \rmH_f(\bx,\by,\bz) &= \sum_{i=1}^{n-1} \frac{x_{i+1}}{y_i-z_i} \left( \frac{x_i^{a_i}-y_i^{a_i}}{x_i-y_i} - \frac{x_i^{a_i}-z_i^{a_i}}{x_i-z_i} \right)\theta_i \otimes\theta_i + \sum_{i=1}^{n-1} \frac{y_i^{a_i}-z_i^{a_i}}{y_i-z_i} \theta_{i+1} \otimes\theta_{i}
 \\
 & + \frac{z_1}{y_n-z_n} \left( \frac{x_n^{a_n}-y_n^{a_n}}{x_n-y_n} - \frac{x_n^{a_n}-z_n^{a_n}}{x_n-z_n} \right)\theta_n \otimes\theta_n
   + \frac{x_n^{a_n}-y_n^{a_n}}{x_n-y_n} \theta_{n} \otimes\theta_{1}.
\end{align*}
\[
\rmH_{f,g}(\bx) = \sum_{i=1}^{n-1} x_i^{a_i-1} \frac{g_i^{a_i}}{1 - g_i} \theta_i \theta_{i+1} + x_n^{a_n-1} \frac{1}{g_n -1} \theta_1 \theta_n, \quad g=(g_1,\dots,g_n)\in G_f\backslash\{\id\}.
\]}
They give
{\small 
\begin{multline*}
\rmH_f(\bx,g(\bx),\bx) 
= \sum_{i=1}^{n-1} \frac{x_i^{a_i-2}x_{i+1}}{g_i-1} \left( \frac{1-g_i^{a_i}}{1-g_i} - a_i \right)\theta_i \otimes\theta_i
+ \sum_{i=1}^{n-1} \frac{g_i^{a_i}-1}{g_i-1}x_i^{a_i-1} \theta_{i+1} \otimes\theta_{i}\\
+ \frac{x_n^{a_n-2}x_{1}}{g_n-1} \left( \frac{1-g_n^{a_n}}{1-g_n} - a_n \right)\theta_n \otimes\theta_n
+ \frac{g_n^{a_n}-1}{g_n-1}x_n^{a_n-1} \theta_n \otimes\theta_{1}.
\end{multline*}
\[
\rmH_{f,g^{-1}}(g(\bx)) = \sum_{i=1}^{n-1} x_i^{a_i-1} \frac{1}{g_i-1} \theta_i \theta_{i+1} + x_n^{a_n-1} \frac{g_n^{a_n}}{1-g_n} \theta_1 \theta_n,\quad g=(g_1,\dots,g_n)\in G_f\backslash\{\id\}.
\]}
Consider a square matrix $M_g$ of size $n$ whose $(i,j)$-th entry is given by
{\small 
\begin{multline}
(M_g)_{ij} := -\frac{x_i^{a_i-2}x_{i+1}}{g_i-1} \left( \frac{1-g_i^{a_i}}{1-g_i} - a_i \right)\delta_{i,j} 
+ x_i^{a_i-1} \frac{g_i^{a_i}}{1 - g_i} \delta_{i+1,j}+ x_{i-1}^{a_{i-1}-1} \frac{1}{g_{i-1} - 1}\delta_{i-1,j}\\
+ x_n^{a_n-1} \frac{g_n^{a_n}}{1 - g_n} \delta_{i,n}\delta_{j,1}+ x_n^{a_n-1} \frac{1}{g_n - 1}\delta_{i,1}\delta_{j,n}.
\end{multline}}
\begin{lemma}\label{lem:13}
For each $g\in G_f\backslash\{\id\}$, we have the following equality in $\Jac(f)$:
\begin{equation}
\sigma_{g,g^{-1}}=(-1)^{\frac{n(n-1)}{2}}\left[\det(M_{g})\right].
\end{equation}
\end{lemma}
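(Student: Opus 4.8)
The plan is to run the argument of Lemma~\ref{lem:11} again, adapting the bookkeeping to the cyclic structure of the loop. First I would record that for loop type Proposition~\ref{prop:I} gives $I_g^c = I_{g^{-1}}^c = \{1,\dots,n\}$ for every $g\in G_f\backslash\{\id\}$, so that $d_g = d_{g^{-1}} = n$ and $d_{g,g^{-1}} = \tfrac{1}{2}(n+n-0) = n$, while $gh=\id$ forces $I_{gh}^c=\emptyset$. Substituting this into \eqref{eq: sigma g,h} and using the description \eqref{mu} of $\LL$, the only term that survives pairing against $\partial_{\theta_{I_g^c}}\otimes\partial_{\theta_{I_{g^{-1}}^c}} = (\partial_{\theta_1}\cdots\partial_{\theta_n})\otimes(\partial_{\theta_1}\cdots\partial_{\theta_n})$ is the one whose $\CC[\theta]^{\otimes 2}$-component is $\theta_1\cdots\theta_n\otimes\theta_1\cdots\theta_n$. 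Hence, up to the prefactor $\tfrac{1}{n!}$ and the sign produced by $\LL$, $\sigma_{g,g^{-1}}$ is the class in $\Jac(f)$ of the coefficient of $\theta_1\cdots\theta_n\otimes\theta_1\cdots\theta_n$ in the $n$-th power of
\[
X := \lfloor\rmH_f(\bx,g(\bx),\bx)\rfloor_{\id} + \lfloor\rmH_{f,g}(\bx)\rfloor_{\id}\otimes 1 + 1\otimes\lfloor\rmH_{f,g^{-1}}(g(\bx))\rfloor_{\id},
\]
the $\tfrac{1}{n!}$ cancelling the $n!$ orderings of the factors in the multinomial expansion.

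Next I would analyse which monomials of $X$ can occur in such a product. Counting how many $\theta$'s land in the left and right tensor slots shows that the summands $\theta_{i+1}\otimes\theta_i$ and the wrap-around summand $\theta_n\otimes\theta_1$ of $\rmH_f(\bx,g(\bx),\bx)$ cannot contribute (exactly as in Lemma~\ref{lem:11}), and that the summands coming from $\lfloor\rmH_{f,g}(\bx)\rfloor_{\id}\otimes 1$ and $1\otimes\lfloor\rmH_{f,g^{-1}}(g(\bx))\rfloor_{\id}$ must be used in equal numbers, i.e. in balanced pairs. I would then match the diagonal terms $\theta_i\otimes\theta_i$ to the diagonal of $M_g$, each balanced pair of a $\theta_i\theta_{i+1}$ from $\rmH_{f,g}(\bx)$ (a super-diagonal entry of $M_g$) with a $\theta_i\theta_{i+1}$ from $\rmH_{f,g^{-1}}(g(\bx))$ (a sub-diagonal entry) to a transposition $(i\ i{+}1)$, and the two wrap-around contributions to the corner entries $(M_g)_{n,1}$ and $(M_g)_{1,n}$. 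Collecting the Clifford signs coming from permuting the $\theta_i$ into increasing order and from \eqref{mu}, this reorganises the coefficient as the Leibniz expansion of $\det(M_g)$, with overall reordering sign $(-1)^{n(n-1)/2}$, which is the claimed identity.

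The main obstacle is precisely the sign and combinatorial bookkeeping forced by the two wrap-around terms, which have no counterpart in the chain case: unlike the open tridiagonal matrix of Lemma~\ref{lem:11}, here $M_g$ is a cyclic tridiagonal matrix, so its determinant also receives contributions from the two full $n$-cycles (forward and backward) built from the corner entries. The delicate point is to verify that the Clifford sign accumulated when moving $\theta_1$ past $\theta_2,\dots,\theta_n$ across the ``seam'' of the loop agrees with the sign $(-1)^{n-1}$ carried by an $n$-cycle in the Leibniz expansion of $\det(M_g)$; once this matches, the minor-by-minor recurrence used in Lemma~\ref{lem:11} closes the computation and yields the stated equality $\sigma_{g,g^{-1}} = (-1)^{n(n-1)/2}[\det(M_g)]$.
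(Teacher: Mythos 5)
Your reduction to the coefficient of $\theta_1\cdots\theta_n\otimes\theta_1\cdots\theta_n$ and the counting argument forcing equal numbers of factors from $\rmH_{f,g}(\bx)\otimes 1$ and $1\otimes\rmH_{f,g^{-1}}(g(\bx))$ are both fine, but the next step contains a genuine error: in the loop case it is \emph{not} true that the summands $\theta_{i+1}\otimes\theta_i$ and $\theta_n\otimes\theta_1$ of $\rmH_f(\bx,g(\bx),\bx)$ drop out ``exactly as in Lemma~\ref{lem:11}''. In the chain case they drop out because no selection of the available quadratic terms can complete such a factor to a full monomial $\theta_{I_g^c}$ in each tensor slot; in the loop case the wrap-around quadratic terms $\theta_1\theta_n\otimes 1$ (from $\rmH_{f,g}$) and $1\otimes\theta_1\theta_n$ (from $\rmH_{f,g^{-1}}$) make such completions possible. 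For instance, for $n=3$ the selections $(\theta_2\otimes\theta_1)\cdot(\theta_1\theta_3\otimes 1)\cdot(1\otimes\theta_2\theta_3)$, $(\theta_3\otimes\theta_2)\cdot(\theta_1\theta_2\otimes 1)\cdot(1\otimes\theta_1\theta_3)$ and $(\theta_3\otimes\theta_1)\cdot(\theta_1\theta_2\otimes 1)\cdot(1\otimes\theta_2\theta_3)$ all survive. These are exactly the ``additional contributions'' the paper's proof points to, and they are indispensable: using the identity that the coefficient of $\theta_{i+1}\otimes\theta_i$ in $\rmH_f(\bx,g(\bx),\bx)$ equals minus the sum of the coefficients of $\theta_i\theta_{i+1}$ in $\rmH_{f,g}(\bx)$ and in $\rmH_{f,g^{-1}}(g(\bx))$ (and its wrap-around analogue for $\theta_n\otimes\theta_1$), they reassemble into the two $n$-cycle terms of the Leibniz expansion of $\det(M_g)$, i.e.\ precisely the third and fourth summands in the proof of Lemma~\ref{lem:14}.

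This omission is not a repairable bookkeeping slip within your scheme, because your scheme cannot generate those $n$-cycle terms any other way. The forward $n$-cycle term $(M_g)_{1,2}(M_g)_{2,3}\cdots(M_g)_{n-1,n}(M_g)_{n,1}$ consists of $n-1$ entries coming from $\rmH_{f,g}$ and a single corner entry coming from $\rmH_{f,g^{-1}}$; realizing it by factors of type $(2,0)$ and $(0,2)$ would need $b=n-1$ and $c=1$, contradicting your own balance $b=c$ once $n\ge 3$ (equivalently, the left-slot product $\theta_1\theta_2\cdot\theta_2\theta_3\cdots$ vanishes since $\theta_2^2=0$), and the same holds for the backward cycle. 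So, as written, your argument would prove that $\sigma_{g,g^{-1}}$ equals $(-1)^{n(n-1)/2}$ times only the ``acyclic tridiagonal plus corner-transposition'' part of $\det(M_g)$, which is a strictly smaller quantity and makes the asserted identity false. The fix is to drop the exclusion claim, keep the $(1,1)$-type factors $\theta_{i+1}\otimes\theta_i$ and $\theta_n\otimes\theta_1$ in the expansion, and use the coefficient identity above to convert their contributions into the missing cyclic terms of the determinant --- which is how the paper's own proof of Lemma~\ref{lem:13}, combined with the computation in Lemma~\ref{lem:14}, actually proceeds.
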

\begin{proof}
The proof is similar to that of Lemma~\ref{lem:11}. The only differences are the term $\theta_n \otimes \theta_1$ in $\rmH_f(\bx,g(\bx),\bx)$ and $\theta_1 \theta_n$ in
$\rmH_{f,g}(\bx)$ (and also in $\rmH_{f,g^{-1}}(g(\bx))$) that give some additional contributions to $\sigma_{g,g^{-1}}$ (in particular the third and fourth summands in proof of Lemma~\ref{lem:14} below).
\end{proof}
\begin{lemma}\label{lem:14}
For each $g\in G_f\backslash\{\id\}$, we have 
{\small
\begin{equation}
\left[\det(M_{g})\right]=\left(\prod_{i=1}^n\frac{1}{g_i-1}\right)\left(\left(\prod_{l=1}^{n} a_l\right)-(-1)^n\right)[x_1^{a_1-1}\cdots x_n^{a_n-1}]
=\left(\prod_{i=1}^n\frac{1}{g_i-1}\right) [H_{g,g^{-1}}].
\end{equation}
}
\end{lemma}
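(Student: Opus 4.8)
The plan is to compute $[\det(M_g)]$ directly from the cyclic tridiagonal shape of the $n\times n$ matrix $M_g$. Abbreviate its diagonal entries by $D_i:=(M_g)_{ii}$, its super-diagonal entries by $U_i:=x_i^{a_i-1}\frac{g_i^{a_i}}{1-g_i}$ (occurring at $(i,i+1)$ for $i<n$ and, as a corner, at $(n,1)$ for $i=n$), and its sub-diagonal entries by $L_i:=x_i^{a_i-1}\frac{1}{g_i-1}$ (at $(i+1,i)$ for $i<n$ and, as a corner, at $(1,n)$ for $i=n$). In the Leibniz expansion only permutations $\sigma$ with $\sigma(i)\in\{i-1,i,i+1\}$ cyclically survive; these are the matchings $S$ of the cycle $C_n$ (products of disjoint adjacent transpositions, each of sign $-1$, replacing a block $D_iD_{i+1}$ by $U_iL_i$) together with the two full rotations $i\mapsto i\pm1$, each an $n$-cycle of sign $(-1)^{n-1}$ contributing $\prod_i U_i$ and $\prod_i L_i$. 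Hence
\begin{equation*}
\det(M_g)=\sum_{S}(-1)^{|S|}\Big(\prod_{\{i,i+1\}\in S}U_iL_i\Big)\Big(\prod_{i\notin S}D_i\Big)+(-1)^{n-1}\Big(\prod_{i=1}^n U_i+\prod_{i=1}^n L_i\Big).
\end{equation*}

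I would then isolate the two contributions that already produce the answer. Writing $b_i:=\frac{1-g_i^{a_i}}{1-g_i}$ so that $D_i=-\frac{x_i^{a_i-2}x_{i+1}}{g_i-1}(b_i-a_i)$, the empty matching $S=\emptyset$ contains the summand obtained by selecting the $-a_i$ part of each factor $b_i-a_i$; since $\prod_i x_i^{a_i-2}x_{i+1}=x_1^{a_1-1}\cdots x_n^{a_n-1}=:m$ already, this summand equals $\frac{\prod_l a_l}{\prod_i(g_i-1)}[m]$ with no further reduction. Likewise $\prod_i L_i=m\,\prod_i\frac{1}{g_i-1}$ is already a multiple of $[m]$, so the rotation $i\mapsto i-1$ contributes $(-1)^{n-1}\frac{1}{\prod_i(g_i-1)}[m]=-\frac{(-1)^n}{\prod_i(g_i-1)}[m]$. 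Together these give exactly $\frac{1}{\prod_i(g_i-1)}\big(\prod_l a_l-(-1)^n\big)[m]$, the claimed value.

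The crux, and the step I expect to be the main obstacle, is therefore to prove that every remaining summand cancels. Each such summand is a single monomial times a $g$-dependent scalar; applying the cyclic Jacobian relations $[x_i^{a_i}]=-a_{i+1}[x_{i+1}^{a_{i+1}-1}x_{i+2}]$ (coming from $\partial f/\partial x_{i+1}$) and the symmetry relations $g_i^{a_i}=g_{i+1}^{-1}$ of $G_f$ reduces each to a scalar multiple of $[m]$, exactly as in the proof of Lemma~\ref{lem:12}, and one must show these scalars sum to zero. The delicate point is the $g$-dependence: using $\prod_i g_i^{a_i}=(\prod_i g_i)^{-1}$ one finds $\prod_i b_i=(-1)^n(\prod_i g_i)^{-1}$, so both $\prod_i U_i=m\,\prod_i\frac{g_i^{a_i}}{1-g_i}$ and the all-$b_i$ part of $\prod_i D_i$ carry a factor $(\prod_i g_i)^{-1}$, and one checks that $(-1)^{n-1}\prod_i U_i$ exactly annihilates that all-$b_i$ part; the residual task is to see that the mixed selections in $\prod_i D_i$ and the nonempty matchings likewise cancel among themselves. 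I would carry this out either by a direct bookkeeping of the matching sum, or---mirroring Lemmas~\ref{lem:12} and~\ref{lem:13}---by cofactor expansion along the first column of $M_g$, which peels off the corner entries $U_n,L_n$ and leaves chain-type continuants already handled by Lemma~\ref{lem:12}, the corners supplying the extra terms (the ``third and fourth summands'' in the proof of Lemma~\ref{lem:13}).

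Finally, the second displayed equality is immediate: it only records the loop-type value $[H_{g,g^{-1}}]=\big(\prod_l a_l-(-1)^n\big)[m]$, which is the cyclic analogue of the chain-type identity used in Lemma~\ref{lem:12} and follows from the same Hessian computation as in \cite[Proof of Lemma~29]{BTW16}.
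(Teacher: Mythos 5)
Your expansion of the cyclic tridiagonal determinant is correct, and so are the pieces you actually verify: the all\nobreakdash-$(-a_i)$ diagonal selection and the rotation $\prod_i L_i$ produce exactly $\bigl(\prod_l a_l-(-1)^n\bigr)\frac{1}{\prod_i(g_i-1)}[m]$ (with $m:=x_1^{a_1-1}\cdots x_n^{a_n-1}$) at the polynomial level, and the all\nobreakdash-$b_i$ diagonal selection cancels $(-1)^{n-1}\prod_i U_i$ via $\prod_i b_i=(-1)^n\prod_i g_i^{-1}$. But the proof stops exactly where the real work begins. The assertion that the mixed diagonal selections and the nonempty matchings cancel in $\Jac(f)$ is the entire content of the lemma: these terms involve monomials such as $x_i^{2a_i-2}\cdot(\cdots)$ that are proportional to $[m]$ only after repeated use of the cyclic relations $[x_i^{a_i}]=-a_{i+1}[x_{i+1}^{a_{i+1}-1}x_{i+2}]$, and showing that the resulting $g$-dependent scalars sum to zero is a nontrivial telescoping computation. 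In the paper this step is handled by a separate inner lemma, proved by induction, which computes the corner-free tridiagonal determinant $[\det(M_g')]$ in the \emph{loop} Jacobian algebra as the full alternating sum $\left(\prod_i\frac{1}{g_i-1}\right)\sum_{k=0}^n(-1)^k\bigl(\prod_{l\le n-k}a_l\bigr)\bigl(\prod_{l>n-k}\frac{1-g_l^{a_l}}{1-g_l}\bigr)[m]$, after which the corner and rotation contributions are combined with this sum and collapsed using $1-g_i^{a_i}=-g_{i+1}^{-1}(1-g_{i+1})$ and $[x_n^{a_n}]=-a_1[x_1^{a_1-1}x_2]$.

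Moreover, your proposed fallback --- cofactor expansion along the first column, with the resulting ``chain-type continuants already handled by Lemma~\ref{lem:12}'' --- would fail as stated. Lemma~\ref{lem:12} is an identity in the \emph{chain-type} Jacobian algebra, and its single-product conclusion relies on a boundary vanishing that is special to the chain case: there $g_{d_g}^{a_{d_g}}=1$ (since $g_{d_g+1}=1$), so $b_{d_g}=\frac{1-g_{d_g}^{a_{d_g}}}{1-g_{d_g}}=0$, which is what collapses the tridiagonal determinant to $\prod_i\frac{-a_i}{1-g_i}$ times a monomial. For loop type and $g\ne\id$ every $g_i^{a_i}=g_{i+1}^{-1}\ne 1$, no $b_i$ vanishes, and the tridiagonal determinant genuinely equals the alternating sum above rather than a single product; this is precisely why the paper proves a new inner lemma instead of citing Lemma~\ref{lem:12}. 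So what is missing is not routine bookkeeping but the key computational lemma; your first route (direct bookkeeping of the matching sum in $\Jac(f)$) could in principle be completed, but as written the cancellation is asserted, not proved.
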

\begin{proof}
First, we have the following
\begin{lemma}
Let $M_g'$ be a square matrix of size $n$ whose $(i,j)$-th entry is given by
{\small 
\[
(M_g')_{ij} := -\frac{x_i^{a_i-2}x_{i+1}}{g_i-1} \left( \frac{1-g_i^{a_i}}{1-g_i} - a_i \right)\delta_{i,j} 
+ x_i^{a_i-1} \frac{g_i^{a_i}}{1 - g_i} \delta_{i+1,j}+ x_{i-1}^{a_{i-1}-1} \frac{1}{g_{i-1} - 1}\delta_{i-1,j}.
\]}
We have 
{\small 
\[
\left[\det(M_{g}')\right]=\left(\prod_{i=1}^n\frac{1}{g_i-1}\right)
\left(\sum_{k=0}^n (-1)^k\left(\prod_{l=1}^{n-k} a_l\right)\left(\prod_{l=n-k+1}^{n} \frac{1-g_l^{a_l}}{1-g_l}\right)\right)[x_1^{a_1-1}\cdots x_n^{a_n-1}] .
\]}
\end{lemma}
\begin{proof}
One can show easily the statement by induction where we use the relations 
$[x_1^{a_1}]=-a_{2}[x_{2}^{a_2-1}x_3]$, $\dots$, $[x_{n-1}^{a_{n-1}}]=-a_{n}[x_{n}^{a_n-1}x_1]$
and $g_1^{a_1}g_{2}=\dots =g_{n-1}^{a_{n-1}}g_{n}=1$. 
\end{proof}
The statement follows from a direct calculation of the determinant:
{\small 
\begin{eqnarray*}
& &\left(\prod_{i=1}^n \left(g_i-1\right)\right)[\det(M_{g})]\\
&=&\left(\sum_{k=0}^n (-1)^k\left(\prod_{l=1}^{n-k} a_l\right)\left(\prod_{l=n-k+1}^{n} \frac{1-g_l^{a_l}}{1-g_l}\right)\right)[x_1^{a_1-1}\cdots x_n^{a_n-1}]\\
& &+a_1\frac{1-g_n^{a_n}}{1-g_n}\left(\sum_{k=0}^{n-2} (-1)^k\left(\prod_{l=2}^{n-k-1} a_l\right)\left(\prod_{l=n-k}^{n-1} \frac{1-g_l^{a_l}}{1-g_l}\right)\right)[x_1^{a_1-1}\cdots x_n^{a_n-1}]\\
& &+(-1)^{n-1}[x_1^{a_1-1}\cdots x_n^{a_n-1}]-\left(\prod_{i=1}^n g_i^{a_i}\right)[x_1^{a_1-1}\cdots x_n^{a_n-1}]\\
&=&\left(\sum_{k=0}^n (-1)^k\left(\prod_{l=1}^{n-k} a_l\right)\left(\prod_{l=n-k+1}^{n} \frac{1-g_l^{a_l}}{1-g_l}\right)\right)[x_1^{a_1-1}\cdots x_n^{a_n-1}]\\
& &-\left(\sum_{k=1}^{n-1} (-1)^k\left(\prod_{l=1}^{n-k} a_l\right)\left(\prod_{l=n-k+1}^{n} \frac{1-g_l^{a_l}}{1-g_l}\right)\right)[x_1^{a_1-1}\cdots x_n^{a_n-1}]\\
& &-(-1)^n[x_1^{a_1-1}\cdots x_n^{a_n-1}]-\left(\prod_{i=1}^n g_i^{a_i}\right)[x_1^{a_1-1}\cdots x_n^{a_n-1}]\\
&=& \left(a_1\cdots a_n-(-1)^n\right)[x_1^{a_1-1}\cdots x_n^{a_n-1}],
\end{eqnarray*}}
where we used the relations $[x_{n}^{a_{n}}]=-a_{1}[x_{1}^{a_1-1}x_2]$, 
$1-g_1^{a_1}=-g_{2}^{-1}(1-g_{2})$, $\dots$,  $1-g_{n-1}^{a_{n-1}}=-g_{n}^{-1}(1-g_{n})$, 
$1-g_n^{a_n}=g_1^{-1}(1-g_{1})$ and $g_1^{a_1}\cdots g_n^{a_n}=g_1^{-1}\cdots g_n^{-1}$.
Since the element $[H_{g,g^{-1}}]$ is nothing but $[\hess(f)]$ and it is given by 
$(a_1\cdots a_n-(-1)^n)[x_1^{a_1-1}\cdots x_n^{a_n-1}]$.
\end{proof}
Thus we have finished the proof of Proposition~\ref{prop:10} also for $f$ of loop type.

\section{Categorical equivalence}
In our previous papers \cite{BTW16,BTW17}, we found an algebra isomorphism
\begin{equation}\label{eq:34}
\Jac(\overline f)=\Jac(\overline f,\{\id\}) \cong \Jac(f,G),
\end{equation}
for some invertible polynomials $f$, $\overline f$ and subgroups $G\subseteq G_{f}\cap \SL(3,\CC)$.
More precisely, $f$ is an invertible polynomial defining an ADE singularity or an exceptional unimodal singularity
and $G$ is any subgroup of $G_{f}\cap \SL(3,\CC)$. 
The polynomial $\overline f$ is defined as the restriction of the map $\widehat f:\widehat{\CC^3/G}\longrightarrow \CC$ to 
a chart $U$ isomorphic to $\CC^3$ containing all the critical points of $\widehat f$ where $\widehat{\CC^3/G}$ is 
a crepant resolution of $\CC^3/G$. See \cite[Theorem~63]{BTW16} and \cite[Theorem~1]{BTW17}\footnote{Theorem~1 in \cite{BTW17} is stated in a different way. Namely, we used as $\overline f$ the Berglund--H\"ubsch transpose of $f$.
However, it is easy to check that two different $\overline f$'s give isomorphic Jacobian algebras.}. 

On the other hand, one may ask whether the (quasi-)equivalence of categories of matrix factorizations holds:
\begin{equation}\label{eq:35}
\MF(\overline f)=\MF_{\{\id\}}(\overline f)\cong \MF_G(f).
\end{equation}
This is true\footnote{the second-named author thanks M.~Wymess for 
reminding him of this at the workshop ``Matrix factorizations and related topics'' at ICMS, 24--28 July 2017.}
 from the construction of $\overline f$ due to the local property of the category of singularities  
of $\{\widehat{f}=0\}\subset \widehat{\CC^3/G}$ which is equivalent to $\MF_G(f)$ \cite[Proposition~1.14]{Or}. 
It is worth mentioning that in \cite{CCR16, CN16} the equivalence is given as their ``orbifold equivalence'' $\overline f\sim_{orb} f$.

The Hochschild cohomology and the cup product is invariant under categorical equivalences.
Theorem~\ref{theorem: main} of this paper confirms the compatibility of the isomorphism \eqref{eq:34} and 
the equivalence \eqref{eq:35} as expected.

\end{document}